\theoremstyle{theorem}
\newtheorem{thm}{Theorem}[section]
\newtheorem{cor}[thm]{Corollary}
\newtheorem{lem}[thm]{Lemma}
\newtheorem{rem}[thm]{Remark}
\numberwithin{equation}{section}
\def\barlp{\overline{\Delta}}
\def\R{\mathbb{R}^{n+1}(c)}
\def\l{\langle}
\def\r{\rangle}
\begin{document}

\title[Triharmonic CMC hypersurfaces in space forms]{Triharmonic CMC hypersurfaces in space forms with at most 3 distinct principal curvatures}

\author[H. Chen ]{Hang Chen}
\address{School of Mathematics and Statistics, Northwestern Polytechnical University, Xi' an 710072,  P. R. China}
\email{\href{mailto:chenhang86@nwpu.edu.cn}{chenhang86@nwpu.edu.cn}}

\author[Z. Guan]{Zhida Guan}
\address{Department of Mathematical Sciences, Tsinghua University, Beijing 100084, P. R. China}
\email{\href{mailto:}{gzd15@mails.tsinghua.edu.cn}}

\begin{abstract}
A $k$-harmonic map is a critical point of the $k$-energy in the space of smooth maps between two Riemannian manifolds. In this paper, we prove that if $M^{n} (n\ge 3)$ is a CMC proper triharmonic hypersurface with at most three distinct principal curvatures in a space form $\mathbb{R}^{n+1}(c)$, then $M$ has constant scalar curvature. This supports the generalized Chen's conjecture when $c\le 0$.  When $c=1$, we give an optimal upper bound of the mean curvature $H$ for a non-totally umbilical proper CMC $k$-harmonic hypersurface with constant scalar curvature in a sphere. As an application, we give the complete classification of the 3-dimensional closed proper CMC triharmonic hypersurfaces in $\mathbb{S}^{4}$.
\end{abstract}

\keywords {Triharmonic hypersurfaces, Constant mean curvature, Generalized Chen's conjection, Rigidity}

\subjclass[2020]{Primary 58E20, 53C43; Secondary 53C42.}

\maketitle

\section{Introduction}\label{sect:1}
A harmonic map $\phi$ between two Riemannian manifolds $(M,g)$ and $(N,\bar{g})$ is a critical point of the energy functional
\begin{equation*}
	E(\phi)=\frac{1}{2}\int_M |d\phi|^2 v_g,
\end{equation*}
where $\phi:(M,g)\rightarrow(N,\bar{g})$ is a smooth map. The Euler-Lagrange equations are given by the vanishing of the tension field $\tau(\phi)$. 

In 1964, Eells and Sampson \cite{ES1964} extended the concept of a harmonic map to a polyharmonic map of order $k$, that is, a critical point of the $k$-energy functional
\begin{equation*}
	E_k(\phi)=\frac{1}{2} \int_M \big|(d+d^{\ast})^k \phi\big|^2 v_g,
\end{equation*}
where $\phi:(M,g)\rightarrow(N,\bar{g})$ is a smooth map. The Euler-Lagrange equations are given by the vanishing of the $k$-tension field $\tau_k(\phi)$. 

For $k=2$, a critical point of the bi-energy $E_2$ is usually called a biharmonic map. In 1986, G. Jiang \cite{Jiang1986} studied the first and second variational formulae of $E_2$. We refer the readers to the very recent book by Ou and Chen \cite{OuChen} and a survey article by Fetcu and Oniciuc \cite{FO21} for the abundant progress  on biharmonic maps. For $k\ge 3$, the first and the second variational formulae of $E_k$ were obtained by S. Wang \cite{Wang1989} in 1989 and by Maeta \cite{Maeta2012a} in 2012 respectively. Polyharmonic maps have been rigorously studied as well, see \cite{ACL1983, BG1992,BL2002,Maeta2012,Maeta2012a,Maeta2015,NU2018,Ou2006} and the references therein. 

A harmonic map is always $k$-harmonic (cf. \cite[Proposition 3.1]{Wang1989}). When $\phi$ is the isometric immersion and $k$-harmonic, we usually call $M$ a $k$-harmonic submanifold of $N$. In this case, it is well known that $\tau(\phi)=n\mathbf{H}$, where $n$ is the dimension of $M$ and $\mathbf{H}$ is the mean curvature vector of $M$ in $N$. Hence, minimal submanifolds of $N$ are always $k$-harmonic ($k\ge 2$). Conversely, $k$-harmonic doesn't mean $l$-harmonic for $1\le l<k$ (cf. \cite{Wang1989, Maeta2012a}). For this reason, a $k$-harmonic map $\phi$ (resp. a submanifold $M$) is called ``\emph{proper}'' if $\phi$ (resp. $M$) is not harmonic (resp. minimal).

When $M$ is compact and $N$ is flat, by the expression of $\tau_k$ and the Stokes' theorem, it is not hard to show that $k$-harmonic and harmonic are equivalent (cf. \cite[Proposition 3.2]{Wang1989}). In the general case, B.-Y. Chen  \cite{Chen1991} proposed a famous conjecture in 1991: any biharmonic submanifold of the Euclidean space is minimal. Chen's conjecture is generalized by Maeta  \cite{Maeta2012}, which states that any polyharmonic submanifold of the Euclidean space is minimal. Both conjectures are still open, and several affirmative partial answers and related results have been obtained, see \cite{BFO2010,NU2011,NU2013,NUG2014,FHZ20, GLV21} and the references therein.

In this paper, we mainly focus on the case $k=3$. A critical point of $E_3$ is usually called a triharmonic map. Nowadays, the study of triharmonic maps is a particularly active subject (cf. \cite{Maeta2015a,MNU2015,MOR19,MR18,MR18a,Wang1991}). In 2015, Maeta-Nakauchi-Urakawa \cite{MNU2015} proved that under suitable conditions (e.g. the completeness of $M$, the finiteness of $E_4(\phi)$ and the $L^4$-norm of $\tau_\phi$), a triharmonic isometric immersion into a Riemannian manifold of non-positive curvature must be minimal. 
In 2012, Maeta proved that any compact constant mean curvature (CMC in short) triharmonic hypersurface $M^n$ in $\mathbb{R}^{n+1}(c) (c\leq 0)$ is minimal (see \cite[Proposition 4.3]{Maeta2012}); very recently,  Montaldo-Oniciuc-Ratto proved that the same conclusion holds for $n=2$ without the compactness assumption (see \cite[Theorem 1.3]{MOR19}). Here $\mathbb{R}^{n+1}(c)$ represents the $(n+1)$-dimensional Euclidean space $\mathbb{R}^{n+1}$, the hyperbolic space $\mathbb{H}^{n+1}$ and the unit sphere $\mathbb{S}^{n+1}$ for $c = 0, -1$ and $1$ respectively. Note that there are at most two distinct principal curvatures for a surface $M^{2}$. In this paper, we consider the higher dimension case and prove the following theorem. 

\begin{thm}\label{thm1.1}
	Let $M^{n} (n\ge 3)$ be a CMC proper triharmonic hypersurface with at most three distinct principal curvatures in $\R$. 
	Then the scalar curvature of $M$ is constant.
\end{thm}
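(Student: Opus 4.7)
The plan is to reduce the theorem to showing that $|A|^2$ is constant on $M$; since $H$ is constant by hypothesis, the Gauss equation
\[R=n(n-1)c+n^2H^2-|A|^2\]
then immediately yields constancy of the scalar curvature. The first step is to unpack the triharmonic equation $\tau_3(\phi)=0$ for an isometric immersion into $\R$ and decompose it into its tangential and normal components relative to the unit normal. Using $H$ constant, this decomposition produces a normal equation of the form $c_1\Delta|A|^2+P(|A|^2,H,c)=0$ for an explicit polynomial $P$ and a non-zero constant $c_1$, and a tangential equation of the form
\[A\bigl(\nabla|A|^2\bigr)=Q(|A|^2,H,c)\,\nabla|A|^2.\]
Arguing by contradiction, suppose $|A|^2$ is not constant, so $U=\{p\in M : \nabla|A|^2(p)\neq 0\}$ is a non-empty open set; on $U$ the tangential equation forces $\nabla|A|^2$ to be a principal direction of the shape operator $A$.

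On $U$ I would choose a local orthonormal frame $\{e_i\}_{i=1}^n$ diagonalising $A$ with $Ae_i=\lambda_ie_i$ and $e_1$ proportional to $\nabla|A|^2$. Writing $\lambda_1,\lambda_2,\lambda_3$ for the distinct eigenvalues with multiplicities $m_1,m_2,m_3$ summing to $n$, the identities $\sum_i m_i\lambda_i=nH$ (constant) and $|A|^2=\sum_i m_i\lambda_i^2$, combined with $e_j(|A|^2)=0$ for $j\ge 2$, should lead to $e_j(\lambda_i)=0$ for every $j\ge 2$ and every $i$. Thus each $\lambda_i$ depends only on the flow parameter $t$ of the integral curves of $e_1$. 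The Codazzi identity in the principal frame then yields explicit formulas for all connection coefficients $\omega_j^k(e_i)$ in terms of the $\lambda_i$ and their $t$-derivatives, and hence an explicit expression for $\Delta|A|^2$ on $U$.

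The main obstacle is then to feed these data back into the normal triharmonic equation and show that the resulting ODE system for $\lambda_1(t),\lambda_2(t),\lambda_3(t)$, together with the CMC constraint, forces $\lambda_i'(t)\equiv 0$. Concretely, one obtains a polynomial identity in the $\lambda_i$ and their $t$-derivatives that must hold identically along the $e_1$-flow; matching coefficients and using the integrality of the multiplicities $m_i$ should split this into enough independent relations to force the derivatives to vanish. A delicate case analysis will likely split along $m_1=1$ versus $m_1\ge 2$: when $m_1=1$ the principal distribution of $\lambda_1$ is automatically integrable, but for $m_1\ge 2$ one must additionally use Codazzi and Ricci relations within the $\lambda_1$-eigenspace to rule out non-trivial connection coefficients there. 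This final case analysis, rather than the basic setup, is where I expect the bulk of the work; completing it gives the desired contradiction, and hence $|A|^2$ (and thus $R$) is constant on $M$.
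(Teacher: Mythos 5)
Your setup coincides with the first half of the paper's argument (argue by contradiction on the set where $\nabla S\neq 0$, use the tangential equation to make $\nabla S$ a principal direction, adapt a frame, and reduce the eigenvalues to functions of the $e_1$-flow parameter), but there are two genuine gaps. First, for a proper CMC hypersurface the tangential part of the triharmonic equation is exactly $HA(\nabla S)=0$, i.e.\ \eqref{tri-2'}: since $H\neq 0$, the principal curvature in the direction of $\nabla S$ is \emph{identically zero}, not some unspecified function $Q(|A|^2,H,c)$. This is not a cosmetic point. Your claim that $\sum_i m_i\lambda_i=nH$ and $e_j(|A|^2)=0$ force $e_j(\lambda_i)=0$ for $j\ge 2$ is a system of only two linear relations in the three unknowns $e_j\lambda_1,e_j\lambda_2,e_j\lambda_3$, which does not force vanishing; the paper closes this precisely because one eigenvalue is $0$ (hence has zero derivative for free), leaving a $2\times 2$ system with determinant proportional to $\lambda-\mu\neq 0$. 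The vanishing of that eigenvalue is also what later makes the sectional curvatures $R_{1j1j}=c+\lambda_1\lambda_j=c$ constant via \eqref{gauss}.

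Second, and more seriously, your endgame is not an argument. ``Feed the data into the normal equation and match coefficients of a polynomial identity'' cannot work as stated: the $\lambda_i(t)$ are particular unknown functions of one variable, so there are no free variables whose coefficients can be matched. Moreover, the paper's contradiction does not come from the normal equation \eqref{tri-1'} at all (that equation is only used afterwards, for Corollary \ref{cor1}). Instead, the decisive input is intrinsic: computing $R_{1j1j}=c$ through the Christoffel symbols of the adapted frame yields the Riccati equations $e_1P-P^{2}=c$ and $e_1Q-Q^{2}=c$ for $P=e_1\lambda/\lambda$, $Q=e_1\mu/\mu$ (Lemma \ref{lem2}); combining these with the CMC relation $s\lambda+t\mu=nH$ gives the algebraic identity $\tfrac{s\lambda}{t\mu}P^{2}=-c/2$, and differentiating it once more along $e_1$ forces $P^{2}=-c/2$ to be constant, which contradicts $e_1P-P^{2}=c$ for every sign of $c$. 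Without identifying this mechanism (or a genuine substitute), the proposal does not reach the conclusion; the single scalar ODE coming from \eqref{tri-1'} is far from sufficient to force $\lambda_i'\equiv 0$.
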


	We would like to remind the readers that, the scalar curvature being constant is equivalent to the squared norm of the second fundamental form being constant for a CMC hypersurface in the space form, which can be derived from the Gauss equation \eqref{gauss}.
	Therefore, as a direct consequence of Theorem \ref{thm1.1}, we obtain
\begin{cor}\label{cor1}
	Let $M^{n} (n\ge 3)$ be a CMC triharmonic hypersurface with at most three distinct principal curvatures in $\R (c\le 0)$. Then $M$ must be minimal.
\end{cor}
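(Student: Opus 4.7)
The plan is to derive Corollary~\ref{cor1} as an essentially algebraic consequence of Theorem~\ref{thm1.1}.

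First, I would assume for contradiction that $M$ is non-minimal, so that $H$ is a non-zero constant. Theorem~\ref{thm1.1} then asserts that the scalar curvature of $M$ is constant, and the remark immediately following that theorem (an application of the Gauss equation, which relates the scalar curvature to $n(n-1)c$, $n^{2}H^{2}$ and $-|A|^{2}$) upgrades this to the statement that $|A|^{2}$ is a non-zero constant as well. Thus the corollary is reduced to ruling out the existence of a CMC proper triharmonic hypersurface in $\R$ with $c\le 0$ and with $|A|^{2}$ constant.

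Next, inserting both $H=\mathrm{const}$ and $|A|^{2}=\mathrm{const}$ into the triharmonic hypersurface equation causes every gradient and Laplacian term in $\tau_{3}=0$ to vanish. Projecting the remaining identity onto the unit normal of $M$ in $\R$ and simplifying, one expects a single algebraic relation of the schematic form
\begin{equation*}
H\cdot Q\bigl(|A|^{2},\,H^{2},\,c\bigr)=0,
\end{equation*}
where $Q$ is a polynomial whose leading behaviour in $|A|^{2}$ is a positive constant times a positive power of $|A|^{2}$. The derivation of this $Q$ should be a direct CMC specialization of the general triharmonic formula already recorded in the paper's preliminaries; the assumption of at most three distinct principal curvatures is not needed at this stage, being absorbed already in the hypothesis of Theorem~\ref{thm1.1}.

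The remaining task is to verify that for $c\le 0$ and $H\ne 0$ one has $Q>0$, which would force the contradiction $H=0$. The dominant term of $Q$ is positive by construction; each term of $Q$ carrying a factor of $c$ enters with a sign that cooperates with the assumption $c\le 0$; and the CMC hypothesis together with the Cauchy--Schwarz inequality gives the lower bound $|A|^{2}\ge nH^{2}>0$. I expect the main (though still routine) obstacle to be the careful sign-tracking needed to confirm that all $c$-dependent terms indeed cooperate, since an unfavourable coefficient on a $c|A|^{2}$- or $c^{2}$-term would spoil the positivity argument. Once $Q>0$ is established, $H=0$ follows immediately, completing the proof.
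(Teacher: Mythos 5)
Your proposal is correct and follows essentially the same route as the paper: the paper's one-line proof applies Theorem~\ref{thm1.1} to make $\Delta S=0$ in \eqref{tri-1'}, leaving exactly your relation $Q=S^{2}-cnS-cn^{2}H^{2}=0$, whose three terms are all non-negative for $c\le 0$ (with $S^{2}>0$ since $S\ge nH^{2}>0$), giving the contradiction. The sign-tracking you flag as the potential obstacle is immediate from the explicit form of \eqref{tri-1'} already recorded in the preliminaries.
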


When $c=1$, there are concrete examples of non-minimal $k$-harmonic hypersurfaces in a sphere, and some classifications and characterizations are obtained. The typical candidates are isoparametric ones. The small sphere $\mathbb{S}^{n}(a) (0<a<1)$ is totally umbilical and all of the principal curvatures are the same. The product of two spheres (sometimes called the \emph{generalized Clifford torus}) 
\begin{equation*}
\mathbb{S}^{m}(a)\times\mathbb{S}^{n-m}(\sqrt{1-a^{2}})\, (0<a<1)
\end{equation*}
is isoparametric of two distinct principal curvatures given, in some orientation, by
\begin{equation}\label{eq-prin-cur-1}
\lambda_1=\cdots=\lambda_{m}=\frac{\sqrt{1-a^{2}}}{a},\quad \lambda_{m+1}=\cdots=\lambda_{n}=-\frac{a}{\sqrt{1-a^{2}}}.
\end{equation}
However, in order to $\mathbb{S}^{n}(a)$ or $\mathbb{S}^{m}(a)\times\mathbb{S}^{n-m}(\sqrt{1-a^{2}})$ becomes $k$-harmonic, there are some restrictions on the parameters $a$ and $m$, see \cite{Maeta2015a, MR18} for details. 

For an orientable hypersurface $M$, we can choose an orientation for $M$ such that the mean curvature function, denoted by $H$, is non-negative. In the following statements, we always use this setting unless otherwise indicated.
Wang-Wu proved the following result.
\begin{thm}[{see \cite[Corollary 1.5]{WW12}}]\label{thm-WW}
	Let $M^{n} (n\ge 3)$ be a CMC proper biharmonic hypersurface in $\mathbb{S}^{n+1}$, then locally either
	
	(1) $H=1$ and $M=\mathbb{S}^{n}(1/\sqrt{2})$;
 or
 
 	(2) $H\in (0,\frac{n-2}{n}]$, and $H=\frac{n-2}{n}$ if and only if $M=\mathbb{S}^{n-1}(1/\sqrt{2})\times\mathbb{S}^{1}(1/\sqrt{2})$.
\end{thm}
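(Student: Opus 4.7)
The plan is to reduce the biharmonic system to a single algebraic identity and then pinch $H$ between two sharp inequalities. For a hypersurface in $\S$, the biharmonic equation splits into a tangential and a normal part. For a CMC hypersurface the tangential part is automatic, while the normal part reads $\Delta H+H(|A|^{2}-n)=0$, which under $H$ constant and $H\ne 0$ forces
\begin{equation*}
|A|^{2}=n.
\end{equation*}
Together with $\sum_i\la_i=nH$ and Cauchy--Schwarz this already gives $H^{2}\le 1$, with equality iff $M$ is totally umbilical with $\la=1$, i.e.\ locally $M=\mathbb{S}^{n}(1/\sqrt{2})$; this is case~(1).

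For the remaining case $H<1$, I would pass to the traceless second fundamental form $\phi=A-H\,\mathrm{Id}$, which satisfies $|\phi|^{2}=n(1-H^{2})=:\beta^{2}$, a positive constant. The Simons-type formula for CMC hypersurfaces in $\S$,
\begin{equation*}
\tfrac{1}{2}\Delta|A|^{2}=|\nabla A|^{2}+nH\,\tr(A^{3})+n|A|^{2}-|A|^{4}-n^{2}H^{2},
\end{equation*}
combined with $|A|^{2}\equiv n$ and $\Delta|A|^{2}=0$, reduces to $|\nabla A|^{2}=nH\bigl(nH-\tr(A^{3})\bigr)$. Substituting the identity $\tr(A^{3})=\tr(\phi^{3})+3nH-2nH^{3}$ gives
\begin{equation*}
|\nabla A|^{2}=-nH\,\tr(\phi^{3})-2n^{2}H^{2}(1-H^{2})\ge 0,
\end{equation*}
so $-\tr(\phi^{3})\ge 2nH(1-H^{2})$. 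Feeding Okumura's inequality $|\tr(\phi^{3})|\le\frac{n-2}{\sqrt{n(n-1)}}\beta^{3}$ into this estimate and simplifying via the algebraic identity $(n-2)^{2}+4(n-1)=n^{2}$ yields $nH\le n-2$, i.e.\ $H\le(n-2)/n$.

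The equality case $H=(n-2)/n$ is extracted by observing that both Okumura's inequality and $|\nabla A|^{2}\ge 0$ must saturate simultaneously; hence $\phi$ has exactly two distinct eigenvalues (one of multiplicity $n-1$, one of multiplicity $1$), and $A$ is parallel. Writing $\la$ for the principal curvature of multiplicity $n-1$ and $\mu$ for the other, the system $(n-1)\la+\mu=n-2$, $(n-1)\la^{2}+\mu^{2}=n$ admits two algebraic solutions $\la\in\{1,(n-4)/n\}$, but only $(\la,\mu)=(1,-1)$ is compatible with the sign $\tr(\phi^{3})<0$ produced by Simons. With two constant distinct principal curvatures, $1$ and $-1$, the eigendistributions of $A$ are integrable with totally umbilic leaves, and a local de~Rham splitting exhibits $M$ as the Clifford torus $\mathbb{S}^{n-1}(1/\sqrt{2})\times\mathbb{S}^{1}(1/\sqrt{2})$.

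The main obstacle is the bookkeeping in the Simons step: one must rewrite $\tr(A^{3})$ through $\tr(\phi^{3})$ with the correct sign so that Okumura's inequality yields exactly the sharp bound $(n-2)/n$ and no weaker. A secondary subtlety is that matching only $H$ and $|A|^{2}$ does not uniquely determine the principal curvatures in the rigidity case; one must revisit the sign of $\tr(\phi^{3})$ coming from Simons to discard the spurious algebraic branch $\la=(n-4)/n$ and thereby isolate the Clifford torus.
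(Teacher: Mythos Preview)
Your argument is correct, but note that the paper does not actually prove this theorem: it is quoted verbatim from Wang--Wu \cite{WW12} as background and motivation, so there is no ``paper's own proof'' to compare against. The steps you outline---reduce CMC biharmonic to $|A|^{2}=n$, get $H^{2}\le 1$ by Cauchy--Schwarz, and then for $H<1$ combine the Simons identity with Okumura's inequality on the traceless shape operator to squeeze out $H\le (n-2)/n$---are exactly the standard route to this result, and your bookkeeping (in particular the identity $\tr(A^{3})=\tr(\phi^{3})+3nH-2nH^{3}$ and the simplification $4(n-1)+(n-2)^{2}=n^{2}$) checks out.

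It is worth observing that the paper's own contribution, Theorem~\ref{thm-3}, is proved by the same mechanism packaged differently: rather than running Simons and Okumura by hand, the authors invoke Lemma~\ref{lem-rigidity} (a pointwise version of H.~Li's rigidity theorem), which internally performs precisely the Simons--Okumura pinching you wrote out, and then they solve the resulting algebraic inequality between $S$ and $H$. So your approach and the paper's approach to its $k$-harmonic analogue are the same argument at different levels of abstraction.

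One minor comment on your equality discussion: you discard the spurious branch $\la=(n-4)/n$ via the sign of $\tr(\phi^{3})$, which is valid. An alternative, slightly cleaner, way is to note that once $|\nabla A|^{2}=0$ the hypersurface is isoparametric, so by Cartan it is locally a product of spheres and hence $\la\mu=-1$; this immediately singles out $(\la,\mu)=(1,-1)$ without revisiting the Simons sign. Either route is fine.
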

Balmu\c{s}-Oniciuc proved a version of Theorem \ref{thm-WW} for proper biharmonic submanifolds with parallel mean curvature in $\mathbb{S}^{n+p}$ , which recovers Theorem \ref{thm-WW} when the codimension $p=1$, see \cite[Theorem 3.11]{BO12}.

  For $k\ge 3$, Montaldo-Oniciuc-Ratto proved that a CMC proper triharmonic surface in $\mathbb{S}^{3}$ is locally the totally umbilical sphere $\mathbb{S}^{2}(1/\sqrt{3})$ (see \cite[Theorem 1.6]{MOR19}).  In the same paper, they proved that 
\begin{thm}[{cf. \cite[Theorem 1.9]{MOR19}}]\label{thm-MOR}
	Let $M^{n} (n\ge 3)$ be a CMC proper $k$-harmonic ($k\ge 3$) hypersurface with constant scalar curvature in $\mathbb{S}^{n+1}$, then $H^{2}\in (0,k-1]$, and $H^{2}=k-1$ if and only if $M$ is locally the totally umbilical hypersurface $\mathbb{S}^{n}(1/\sqrt{k})$.
\end{thm}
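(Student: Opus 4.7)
The plan is to convert the $k$-harmonic PDE into a single algebraic identity by using both the CMC assumption and the constancy of $|A|^2$ (which, for a CMC hypersurface in a space form, is equivalent to constant scalar curvature via the Gauss equation). With $H$ and $|A|^2$ both constant, every first-order derivative of these quantities vanishes, and this is exactly the structure needed to collapse the iterated Laplacians appearing in the $k$-tension field into purely algebraic expressions.

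First I would compute, for a CMC immersion $\phi : M^n \to \mathbb{S}^{n+1}$ with constant $|A|^2$, the iterated rough Laplacians $\bar\Delta^{\ell}\nu$ on the pull-back bundle $\phi^{-1}T\mathbb{S}^{n+1}$, where $\nu$ is a unit normal. Using $\bar\nabla_X\nu = -A(X)$, the Codazzi equation, and the vanishing of $\nabla H$ and $\nabla |A|^2$, a direct calculation yields $\bar\Delta\nu = -|A|^2\,\nu$ and inductively
\begin{equation*}
\bar\Delta^{\ell}\nu = (-1)^{\ell}|A|^{2\ell}\,\nu, \qquad \ell \ge 1,
\end{equation*}
so $\bar\Delta^{\ell}\tau = nH(-1)^{\ell}|A|^{2\ell}\,\nu$, where $\tau = nH\nu$.

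Next I would feed these into Maeta's explicit expression for the $k$-tension field, simplifying each curvature correction with the identity $R^{\mathbb{S}^{n+1}}(X,Y)Z = \langle Y,Z\rangle X - \langle X,Z\rangle Y$. Since every contribution ends up parallel to $\nu$, the $k$-harmonic equation reduces to
\begin{equation*}
\tau_k^{\perp} \;=\; nH\,P_k(|A|^2, H^2)\,\nu \;=\; 0
\end{equation*}
for an explicit polynomial $P_k$ produced by the recursion. Properness gives $H\neq 0$, so $P_k(|A|^2, H^2) = 0$. Combined with the elementary inequality $|A|^2 \ge nH^2$ (Cauchy--Schwarz on the principal curvatures, with equality iff $M$ is totally umbilical), the analysis of $P_k$ should deliver the bound $H^2 \le k-1$. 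Equality forces $|A|^2 = nH^2$ and hence total umbilicity, so $M$ is locally a round sphere $\mathbb{S}^n(a) \subset \mathbb{S}^{n+1}$; matching $H = \sqrt{1-a^2}/a = \sqrt{k-1}$ pins down $a = 1/\sqrt{k}$, and a direct verification confirms that $\mathbb{S}^n(1/\sqrt{k})$ is indeed proper $k$-harmonic.

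The main obstacle will be the second step: Maeta's recursion for $\tau_k$ contains ``mixed'' intermediate terms of the form $\bar\Delta^{i}R^{N}(\bar\Delta^{j}\tau,\,d\phi)\,d\phi$ with $i+j<k-1$, and one must verify that, under CMC plus constant $|A|^2$, they all combine into the single polynomial $P_k$ with precisely the coefficients needed to produce the sharp bound $H^2 \le k-1$. For $k=2,3$ this is a short direct computation, but in general a careful induction on $k$, probably via a structural identity for the iterated Jacobi operator restricted to the normal line $\mathbb{R}\nu$, is required.
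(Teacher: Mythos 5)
Your proposal follows essentially the same route as the paper: the paper simply quotes \cite[Theorem 1.10]{MOR19}, which says that under CMC and constant scalar curvature the $k$-harmonic equation collapses to the algebraic identity $S^{2}-nS-(k-2)n^{2}H^{2}=0$ (your polynomial $P_k$), and then runs exactly your second half --- solve for $S$, apply $S\ge nH^{2}$ to get $H^{2}\le k-1$, and observe that equality forces $S=nH^{2}$, hence total umbilicity and $a=1/\sqrt{k}$. The iterated rough-Laplacian computation you sketch (with all terms proportional to $\nu$ once $\nabla H=\nabla S=0$) is precisely the standard derivation of that quoted identity, so nothing in your plan diverges from the paper's argument.
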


Comparing Theorem \ref{thm-WW}, we can give an optimal upper bound for a hypersurface which is not totally umbilical and  satisfies the condition of Theorem \ref{thm-MOR}. Precisely, we prove that
\begin{thm}\label{thm-3}
	Let $M^{n} (n\ge 3)$ be a proper CMC  $k$-harmonic ($k\ge 3$) hypersurface  with constant scalar curvature in $\mathbb{S}^{n+1}$.  
	If $M$ is not totally umbilical, then $H^{2}\in(0,t_0]$, where
	$t_0$ is the largest real root of the polynomial 
	\begin{equation*}
	\begin{split}
		f_{n,k}(t)&=n^4 t^3-n^2\big((k-1)n^2-2(k+2) n+2(k+2)\big) t^2\\
		&\quad -(n-1)\big(3 n-(k+2)\big)\big((k-1) n-(k+2)\big) t-(n-1)(n-2)^2.
	\end{split}
	\end{equation*}
	
	Moreover,  $H^2=t_0$ if and only  $M$ is locally $\mathbb{S}^{n-1}(a)\times\mathbb{S}^{1}(\sqrt{1-a^{2}})$ with
	\begin{equation}\label{eq-rad}
	a^{2}=\frac{2(n-1)^{2}}{n^{2}H^{2}+2n(n-1)+nH\sqrt{n^{2}H^{2}+4(n-1)}}.
	\end{equation}
\end{thm}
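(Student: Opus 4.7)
The plan is to use the constancy of both $H$ and $|A|^{2}$ to reduce the $k$-harmonic equation to a single algebraic identity relating $H$, $|A|^{2}$ and the cubic symmetric function $f_{3} := \operatorname{tr}(A^{3}) = \sum_{i}\lambda_{i}^{3}$, and then to eliminate $f_{3}$ via the sharp Okumura inequality to obtain a polynomial inequality for $H^{2}$. The polynomial that arises should match $f_{n,k}$ exactly, and the equality case of Okumura's inequality should directly force $M$ to be the generalized Clifford torus with multiplicities $n-1$ and $1$.

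First I would redo the $k$-tension field computation underlying Theorem \ref{thm-MOR}, but retaining the dependence on $f_{3}$ instead of discarding it. Since $H$ and $|A|^{2}$ are constant, covariant derivatives of $H$ vanish and every iterated Laplacian term appearing in the normal part of $\tau_{k}(\phi)$ collapses to a polynomial expression in $H$, $|A|^{2}$ and $f_{3}$ whose coefficients depend on $n$ and $k$. The outcome should be an algebraic identity of the shape
\[
\alpha_{n,k}(H,|A|^{2})\, f_{3} + \beta_{n,k}(H,|A|^{2}) = 0,
\]
which is a strict refinement of the mere bound $H^{2} \le k-1$ extracted in Theorem \ref{thm-MOR}.

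Next I would apply the sharp Okumura inequality to the shifted principal curvatures $\mu_{i} := \lambda_{i}-H$, which satisfy $\sum_{i}\mu_{i}=0$ and $\sum_{i}\mu_{i}^{2} = |A|^{2} - nH^{2} =: B \ge 0$, namely
\[
\Bigl|\sum_{i}\mu_{i}^{3}\Bigr| \le \frac{n-2}{\sqrt{n(n-1)}}\,B^{3/2},
\]
with equality if and only if $n-1$ of the $\mu_{i}$ coincide and the remaining one is the odd one out. Since $\sum_{i}\mu_{i}^{3} = f_{3} - 3H|A|^{2} + 2nH^{3}$, this gives a sharp two-sided bound on $f_{3}$ in terms of $H$ and $|A|^{2}$. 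Plugging the extremal value of $f_{3}$ (on the side compatible with the chosen orientation making $H\ge 0$) into the identity above and clearing the resulting radical should produce exactly the one-variable polynomial inequality $f_{n,k}(H^{2})\le 0$. Since $f_{n,k}(0) = -(n-1)(n-2)^{2} < 0$ for $n\ge 3$ while $f_{n,k}(t)\to +\infty$, the largest real root $t_{0}$ is positive and the inequality is equivalent to $H^{2}\le t_{0}$.

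For the equality case, $H^{2}=t_{0}$ saturates Okumura's inequality, so $M$ has exactly two distinct principal curvatures with multiplicities $n-1$ and $1$; combined with the CMC condition, this matches the pattern \eqref{eq-prin-cur-1} with $m=n-1$ and identifies $M$ locally with $\mathbb{S}^{n-1}(a)\times \mathbb{S}^{1}(\sqrt{1-a^{2}})$. Solving the quadratic $(n-1)u^{2} - nHu - 1 = 0$ for $u := \sqrt{1-a^{2}}/a$ and then using $a^{2} = 1/(u^{2}+1)$ yields formula \eqref{eq-rad}. The hardest part will be Step~1: keeping enough structure in the $k$-tension-field computation so that, after substitution of the Okumura-extremal $f_{3}$ and simplification, the resulting cubic in $H^{2}$ is precisely $f_{n,k}$ with the stated coefficients. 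This demands a careful tracking of the curvature and second-fundamental-form contractions in the iterated covariant Laplacian of the unit normal $\xi$, in substantially more detail than was necessary for Theorem \ref{thm-MOR}.
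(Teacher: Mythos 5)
Your Step 1 rests on a false premise, and it is exactly the step you flag as the hardest. The $k$-tension field of a CMC hypersurface in $\mathbb{S}^{n+1}$ does \emph{not} involve $f_3=\tr(A^3)$: since $\nabla^{\phi}_{e_i}\xi=-Ae_i$, one application of the rough Laplacian gives $\barlp\xi=S\xi+d\phi\big(\nabla(nH)\big)$, so for constant $H$ and $S$ every iterate collapses to $S^{j}\xi$, and the curvature terms of a space form contribute only traces of the identity endomorphism. This is why the equation quoted from \cite{MOR19} (Theorem 1.10 there) reads $S^{2}-nS-(k-2)n^{2}H^{2}=0$, a relation between $S$ and $H$ alone, which is already fully exploited in Theorem \ref{thm-MOR}. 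There is no identity of the form $\alpha_{n,k}f_3+\beta_{n,k}=0$ to be extracted from the Euler--Lagrange equation, however carefully one tracks the contractions; consequently Okumura's inequality, which only bounds $f_3$, constrains nothing on its own.

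The relation involving $f_3$ that actually drives the sharper bound is the Simons-type identity coming from $\Delta S=0$ --- equivalently, the computation of the Cheng--Yau operator $\square(nH)$ in H.~Li's rigidity theorem, reproduced as Lemma \ref{lem-rigidity}. There $f_3$ enters through $\sum_{i,j}R_{ijij}(\lambda_i-\lambda_j)^{2}$, Okumura's inequality is applied to that term, and the leftover $|\nabla A|^{2}\ge 0$ is what forces isoparametricity in the equality case. The paper then compares the resulting pinching threshold
$S\ge n+\frac{n^{3}H^{2}}{2(n-1)}-\frac{n(n-2)}{2(n-1)}\sqrt{n^{2}H^{4}+4(n-1)H^{2}}$
with the explicit value $S=\frac{n}{2}\big(1+\sqrt{1+4(k-2)H^{2}}\big)$ supplied by the $k$-harmonic equation; the cubic $f_{n,k}$ arises from clearing the two radicals in that comparison, not from eliminating $f_3$ out of the tension field. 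Your second half (Okumura equality forcing multiplicities $(n-1,1)$, hence a Clifford torus, and the quadratic determining the radius $a$) is in the right spirit and matches the paper, but as written your argument has no source for the $f_3$-identity it needs, so the chain breaks at the outset.
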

\begin{rem}
	Since $f_{n,k}(t)$ is a cubic polynomial, either it has a unique real root and two non-real complex conjugate roots, or it has three  real roots (counted with multiplicity). 
	From the proof of Theorem \ref{thm-3} in Sect.~\ref{sect:4}, we can see that $t_0$ is simple and $t_0\in (0,k-1)$ in both cases.
	
	It is easy to see that $a^{2}<\frac{n-1}{n}$ from \eqref{eq-rad}, which is an interesting fact pointed out in \cite[Remark 1.7]{AC94}.
\end{rem}
Combining Theorem \ref{thm1.1} with Theorem \ref{thm-MOR} and Theorem \ref{thm-3}, we obtain
\begin{cor}\label{cor1.3}
	Let $M^{n} (n\ge 3)$ be a proper CMC  triharmonic hypersurface with at most three distinct principal curvatures in $\mathbb{S}^{n+1}$. 
	Then either 
	
	(1) $H^{2}=2$ and $M$ is locally $\mathbb{S}^{n}(1/\sqrt{3})$; or
	
	(2) $H^2\in (0, t_0]$, and $H^{2}=t_0$ if and only $M$ is locally $\mathbb{S}^{n-1}(a)\times\mathbb{S}^{1}(\sqrt{1-a^{2}})$. Here $t_0$ is the unique real root belonging to $(0, 2)$ of the polynomial 
	\begin{equation*}
	f_{n,3}(t)=n^4t^3-2n^2(n^2-5n+5)t^2-(n-1)(2n-5)(3n-5)t-(n-1)(n-2)^2,
	\end{equation*}
	and the radius $a$ is given by \eqref{eq-rad}.
	\end{cor}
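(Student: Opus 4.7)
The plan is to chain together Theorem~\ref{thm1.1}, Theorem~\ref{thm-MOR}, and Theorem~\ref{thm-3}, all specialized to the parameters $c=1$ and $k=3$. First, I would apply Theorem~\ref{thm1.1} with $c=1$: the assumption that $M^{n}$ ($n\geq 3$) is a proper CMC triharmonic hypersurface with at most three distinct principal curvatures in $\mathbb{S}^{n+1}=\mathbb{R}^{n+1}(1)$ immediately yields that the scalar curvature of $M$ is constant. This is the key step: it supplies the constant-scalar-curvature hypothesis required by the two rigidity theorems that follow.

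With constant scalar curvature in hand, I would invoke Theorem~\ref{thm-MOR} at $k=3$ to obtain $H^{2}\in(0,2]$, together with the rigidity $H^{2}=2$ if and only if $M$ is locally $\mathbb{S}^{n}(1/\sqrt{3})$; this gives alternative~(1). In the complementary situation, $M$ is not totally umbilical, so Theorem~\ref{thm-3} applies at $k=3$ and sharpens the bound to $H^{2}\in(0,t_{0}]$, with $H^{2}=t_{0}$ characterizing the generalized Clifford torus $\mathbb{S}^{n-1}(a)\times \mathbb{S}^{1}(\sqrt{1-a^{2}})$ for $a$ given by \eqref{eq-rad}; this gives alternative~(2). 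The displayed polynomial $f_{n,3}$ is just the specialization of $f_{n,k}$ at $k=3$: substituting $k-1=2$ and $k+2=5$ turns the coefficient $-n^{2}\bigl((k-1)n^{2}-2(k+2)n+2(k+2)\bigr)$ into $-2n^{2}(n^{2}-5n+5)$ and the coefficient $-(n-1)\bigl(3n-(k+2)\bigr)\bigl((k-1)n-(k+2)\bigr)$ into $-(n-1)(2n-5)(3n-5)$, matching the stated form.

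The only point not handled by a direct quotation is the assertion that $t_{0}$ is the \emph{unique} real root of $f_{n,3}$ in $(0,2)$, rather than merely the largest. The remark following Theorem~\ref{thm-3} already places $t_{0}\in (0,k-1)=(0,2)$ and records that $t_{0}$ is simple, but it remains to rule out additional real roots of $f_{n,3}$ lying inside $(0,2)$. I expect this to reduce to a short cubic inspection: since $f_{n,3}(0)=-(n-1)(n-2)^{2}<0$ and a direct expansion gives $f_{n,3}(2)=27(n^{3}+n^{2}-4n+2)>0$ for all $n\geq 3$, the sign change in $(0,2)$ is already accounted for by $t_{0}$, and to exclude the ``three positive real roots all in $(0,2)$'' scenario one locates the critical points of $f_{n,3}'$ or computes the discriminant. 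This local cubic analysis is the only piece of genuine calculation; there is no conceptual obstacle, and the corollary is in essence a matter of coherent assembly of the three preceding theorems.
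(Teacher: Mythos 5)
Your assembly is exactly what the paper intends: the authors state that Corollary \ref{cor1.3} ``follows directly from Theorem \ref{thm1.1}, Theorem \ref{thm-MOR} and Theorem \ref{thm-3}'' and omit the proof, and your chaining of those three results (with the dichotomy totally umbilical vs.\ not) is precisely that argument. Your extra care about the uniqueness of the root of $f_{n,3}$ in $(0,2)$ is correct and in fact slightly more thorough than the paper, which only records that $t_0$ is the unique root in $(t_1,k-1)$ and is the largest real root; the missing piece is settled, e.g., by noting $f_{n,3}'(0)=-(n-1)(3n-5)(2n-5)<0$ and $f_{n,3}'(t_1)<0$, so the upward quadratic $f_{n,3}'$ is negative on all of $[0,t_1]$ and $f_{n,3}$ stays below $f_{n,3}(0)<0$ there.
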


We also have the following two consequences.
\begin{cor}\label{cor1.4}
	Let $M^{n} (n\ge 3)$ be a proper CMC  triharmonic hypersurface with two distinct principal curvatures in $\mathbb{S}^{n+1}$. 
	Then $M$ is locally  $\mathbb{S}^{m}(a)\times\mathbb{S}^{n-m}(\sqrt{1-a^{2}})$, where $a^{2}$ is the unique real root of the polynomial 
	\begin{equation}\label{eq-P}
	P_{n,m}(x):=3nx^3-(2n+5m)x^2+5mx-m.
	\end{equation}
\end{cor}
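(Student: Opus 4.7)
The plan is to combine Theorem~\ref{thm1.1} with the classical classification of CMC hypersurfaces having two distinct constant principal curvatures in $\mathbb{S}^{n+1}$, and then impose the triharmonic equation on the resulting generalized Clifford torus. Since two distinct principal curvatures is a special case of at most three, Theorem~\ref{thm1.1} applies and yields that the scalar curvature, hence $|A|^2$, is constant on $M$. Writing $\lambda_1,\lambda_2$ for the two principal curvatures with multiplicities $m$ and $n-m$, the system
\[
nH=m\lambda_1+(n-m)\lambda_2,\qquad |A|^2=m\lambda_1^2+(n-m)\lambda_2^2,
\]
with both sides constant, forces $\lambda_1,\lambda_2$ themselves to be constant.

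I would then invoke the classical result (essentially due to Cartan) that a hypersurface of $\mathbb{S}^{n+1}$ with exactly two distinct constant principal curvatures is isoparametric and is locally congruent to a generalized Clifford torus $\mathbb{S}^m(a)\times\mathbb{S}^{n-m}(\sqrt{1-a^2})$. By \eqref{eq-prin-cur-1}, this means $\lambda_1=\sqrt{1-a^2}/a$ with multiplicity $m$ and $\lambda_2=-a/\sqrt{1-a^2}$ with multiplicity $n-m$; in particular $\lambda_1\lambda_2=-1$, and only $x:=a^2\in(0,1)$ remains to be determined.

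The next step is to substitute this data into the triharmonic equation. Because $H$ and $|A|^2$ are both constant, every Laplacian term in the expression for $\tau_3$ vanishes, and the triharmonic condition collapses to the pointwise algebraic identity in $H$, $|A|^2$, and $\tr A^3$ that underlies the proofs of Theorem~\ref{thm-MOR} and Theorem~\ref{thm-3}. Plugging in
\[
nH=m\lambda_1+(n-m)\lambda_2,\quad |A|^2=\frac{m(1-x)}{x}+\frac{(n-m)x}{1-x},\quad \tr A^3=m\lambda_1^3+(n-m)\lambda_2^3,
\]
and simplifying using $\lambda_1\lambda_2=-1$ to eliminate square roots and clear denominators, the identity should reduce to
\[
P_{n,m}(x)=3nx^3-(2n+5m)x^2+5mx-m=0,
\]
which is exactly \eqref{eq-P}.

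Finally I would show that $P_{n,m}$ has a unique real root. Since $P_{n,m}(0)=-m<0$ and $P_{n,m}(1)=n-m>0$, there is a root in $(0,1)$, which we take as $a^2$. The derivative $P_{n,m}'(x)=9nx^2-2(2n+5m)x+5m$ has discriminant $16(4n-5m)(n-5m)$; if it is nonpositive, $P_{n,m}$ is monotone and we are done, while in the remaining cases a direct evaluation of $P_{n,m}$ at its two critical points shows that the local maximum and minimum have the same sign, so there is still exactly one real zero. The main obstacle I anticipate is the third paragraph: the algebraic simplification from the generic triharmonic identity to the cubic $P_{n,m}(x)$ is delicate because the expressions involve square roots of $x/(1-x)$ and the multiplicities $m$ and $n-m$ enter asymmetrically, so every cancellation must be tracked with care.
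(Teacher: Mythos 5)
Your proposal follows the paper's proof essentially verbatim: Theorem \ref{thm1.1} gives constant $S$, hence constant principal curvatures, Cartan's classification gives the generalized Clifford torus, and substituting into \eqref{tri-1'} (which for $c=1$ and constant $S$ reads $S^{2}-nS-n^{2}H^{2}=0$ --- no $\tr A^{3}$ term actually appears) yields the factor $P_{n,m}(a^{2})=0$ after discarding the minimal root $a^{2}=m/n$. The only genuine difference is the uniqueness of the real root, where the paper simply computes the discriminant of the cubic itself, $-m(n-m)(32n^{2}-125mn+125m^{2})<0$, which is cleaner than your case analysis on the critical points of $P_{n,m}$ (whose derivative has discriminant $4(4n-5m)(n-5m)$, not $16(4n-5m)(n-5m)$, though only the sign matters).
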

\begin{cor}\label{cor1.5}
	Let $M^{n} (n\ge 3)$ be a closed CMC triharmonic hypersurface with three distinct principal curvatures everywhere in $\mathbb{S}^{n+1}$. 
	Then $M$ must be minimal.
\end{cor}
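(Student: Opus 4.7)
The strategy is to proceed by contradiction, assuming $M$ is proper (so $H>0$), and ultimately deduce $H=0$.

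Since $M$ is a CMC proper triharmonic hypersurface with (at most) three distinct principal curvatures, Theorem \ref{thm1.1} forces $|A|^{2}$, and hence the scalar curvature, to be constant on $M$. Because the number of distinct principal curvatures is everywhere equal to three and $M$ is connected, the multiplicities $m_{1},m_{2},m_{3}$ of the three principal curvatures $\la_{1},\la_{2},\la_{3}$ are globally constant, with $m_{1}+m_{2}+m_{3}=n$. Constancy of $nH=\sum_{i} m_{i}\la_{i}$ and $|A|^{2}=\sum_{i} m_{i}\la_{i}^{2}$ then supplies two algebraic relations among the three $\la_{i}$.

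A third relation must be extracted from the triharmonic equation $\tau_{3}(\phi)=0$. On a CMC hypersurface in $\mathbb{S}^{n+1}$ with constant $|A|^{2}$, all gradient and Laplacian contributions of $H$ and $|A|^{2}$ drop out, so the normal component of $\tau_{3}(\phi)=0$ should collapse to an algebraic identity involving the power sums $\sum_{i} m_{i}\la_{i}^{p}$ for small $p$, with coefficients depending only on $n$, $H$ and $|A|^{2}$. Combined with the two relations above, this third equation determines $(\la_{1},\la_{2},\la_{3})$ up to finitely many unordered triples; continuity and connectedness of $M$ then force each $\la_{i}$ to be constant on $M$.

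Hence $M$ is an isoparametric hypersurface in $\mathbb{S}^{n+1}$ with $g=3$ distinct constant principal curvatures. By Cartan's classification, the multiplicities must satisfy $m_{1}=m_{2}=m_{3}=m\in\{1,2,4,8\}$ with $n=3m$, and one may write $\la_{i}=\cot\bigl(\theta+(i-1)\pi/3\bigr)$ for a unique $\theta\in(0,\pi/3)$. Using $\cot\theta+\cot(\theta+\pi/3)+\cot(\theta+2\pi/3)=3\cot(3\theta)$, one obtains $H=\cot(3\theta)$. Substituting this parametrization into the identity from the previous step yields a polynomial equation in $\cos(3\theta)$ (equivalently in $H$); combined with the upper bound $H^{2}\le t_{0}$ provided by Corollary \ref{cor1.3}, the only admissible root is $\cos(3\theta)=0$, giving $H=0$ and contradicting properness.

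The principal obstacle will be the second step: rigorously deriving the third algebraic identity from $\tau_{3}(\phi)=0$ under the CMC and constant $|A|^{2}$ hypotheses, and verifying that it is genuinely independent of the two relations coming from $H$ and $|A|^{2}$. Closedness of $M$ is expected to enter precisely at this stage, via an integration-by-parts or maximum-principle argument on the compact manifold that upgrades the local PDE residue of $\tau_{3}=0$ to the desired pointwise algebraic identity; once the identity is in hand, the Cartan-type reduction and the final polynomial check are routine.
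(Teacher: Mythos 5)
Your overall architecture (constant $S$ from Theorem \ref{thm1.1} $\Rightarrow$ isoparametric $\Rightarrow$ rule out the Cartan $g=3$ family) matches the paper's, but the step you yourself flag as the ``principal obstacle'' is a genuine gap, and it cannot be filled the way you suggest. For a CMC hypersurface of $\mathbb{S}^{n+1}$ the triharmonic condition is exactly the system \eqref{tri-1'}--\eqref{tri-2'}: once $S$ is constant, the tangential equation is vacuous and the normal equation collapses to $S^{2}-nS-n^{2}H^{2}=0$, a single relation between the two constants $S$ and $H$. There is no further pointwise identity involving higher power sums $\sum_i m_i\lambda_i^{p}$; the third-order tension field of an isometric immersion into $\mathbb{S}^{n+1}$ simply does not see the individual principal curvatures beyond $H$ and $S$ in this situation. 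Consequently you only have two constant-coefficient symmetric-function constraints on three unknown functions $\lambda_1,\lambda_2,\lambda_3$, which leaves a one-parameter family and does not force each $\lambda_i$ to be constant. This is precisely why the paper invokes the theorem of S.~Chang \cite{Chang94}: a \emph{closed} hypersurface of $\mathbb{S}^{n+1}$ with constant mean and scalar curvatures and three distinct principal curvatures everywhere is isoparametric. That is a substantial result (this is where closedness enters, via integral estimates in Chang's paper), not a routine integration-by-parts upgrade of $\tau_3=0$.

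Your endgame also misfires in two ways. First, the paper explicitly remarks that the bound $H^{2}\le t_0$ from Corollary \ref{cor1.3} cannot produce the contradiction here, since in the three-curvature case one only gets the strict inequality $H^{2}<t_0$, which is consistent with $H\neq 0$. Second, once isoparametricity is in hand, the exclusion of the Cartan family is not ``the only admissible root is $\cos(3\theta)=0$'': substituting $\lambda_i=\cot(\theta+(i-1)\pi/3)$ into \eqref{tri-1'} yields (for $n=3$) the condition $3x^{2}+14x+19=0$ in $x=\cos(6\theta)$, which has \emph{no} real roots at all; for general $n$ the paper cites \cite[Theorem 1.12]{MOR19}, which shows no isoparametric hypersurface with three distinct principal curvatures in $\mathbb{S}^{n+1}$ is proper triharmonic. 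So the correct proof is: $H\neq 0$ and Theorem \ref{thm1.1} give constant $S$; Chang's theorem gives isoparametric; Montaldo--Oniciuc--Ratto (or the direct computation in the proof of Theorem \ref{thm1.7}) gives the contradiction.
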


In particular, we can obtain the complete classification of the 3-dimensional complete proper CMC triharmonic hypersurfaces in $\mathbb{S}^{4}$.
\begin{thm}\label{thm1.7}
	Let $M^{3}$ be a complete proper CMC triharmonic hypersurface in $\mathbb{S}^{4}$. 
	Then $M$ is one of the following: 
	
	(1) $M=\mathbb{S}^{3}(1/\sqrt{3})$; 
	
	(2) $M= \mathbb{S}^{2}(a)\times\mathbb{S}^{1}(\sqrt{1- a^{2}})$, where $a^{2}\approx 0.389833$ is the unique real root of $9x^{3}-16x^{2}+10x-2$.
\end{thm}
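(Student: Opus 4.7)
The plan is to combine Theorem~\ref{thm1.1} with Corollaries~\ref{cor1.3}, \ref{cor1.4}, and \ref{cor1.5}, classifying $M$ by the number of its distinct principal curvatures, which is at most $3$ since $\dim M=n=3$. In particular Theorem~\ref{thm1.1} applies automatically, so the scalar curvature of $M$ is constant; together with the Gauss equation and the CMC assumption this yields $|A|^{2}=\mathrm{const}$. Since $|A|^{2}-nH^{2}$ is then a non-negative constant vanishing precisely at umbilical points, the umbilical set is either all of $M$ or empty.

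If $M$ is totally umbilical, Corollary~\ref{cor1.3}(1) with $n=3$ forces $H^{2}=2$ and identifies $M$ locally, hence globally by completeness and connectedness, with $\mathbb{S}^{3}(1/\sqrt{3})$, yielding conclusion~(1). If $M$ is non-umbilical and has exactly two distinct principal curvatures at every point, Corollary~\ref{cor1.4} shows $M$ is locally $\mathbb{S}^{m}(a)\times\mathbb{S}^{3-m}(\sqrt{1-a^{2}})$ with $a^{2}$ a positive root of $P_{3,m}(x)$, subject to $a^{2}\le m/3$ from the convention $H\ge 0$. The cubic $P_{3,1}(x)=9x^{3}-11x^{2}+5x-1$ has derivative $27x^{2}-22x+5$ with negative discriminant, so $P_{3,1}$ is strictly increasing on $\mathbb{R}$; since $P_{3,1}(1/3)=-2/9<0$, its unique real root exceeds $1/3$, ruling out $m=1$. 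Hence $m=2$, and $P_{3,2}(x)=9x^{3}-16x^{2}+10x-2$ has unique real root $a^{2}\approx 0.389833\in(0,2/3)$. Standard isoparametric rigidity together with completeness promotes this local description to a global congruence, yielding conclusion~(2).

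Finally, suppose $M$ has three distinct principal curvatures on some non-empty open set $U\subset M$. I would argue on $U$ that the triharmonic system, combined with the already-established constancy of $H$ and $|A|^{2}$, also forces the third elementary symmetric function of the three principal curvatures to be constant. The three principal curvatures are then roots of a single cubic with constant coefficients, hence themselves constant on $U$; by analyticity of CMC hypersurfaces in $\mathbb{S}^{n+1}$ they extend as constants to all of $M$, so $M$ is isoparametric in $\mathbb{S}^{4}$. The Cartan classification makes $M$ automatically compact, and Corollary~\ref{cor1.5} then forces $M$ to be minimal, contradicting the properness hypothesis. Consequently this case does not occur, and conclusions (1)--(2) exhaust all possibilities.

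The main obstacle is this last case: promoting the \emph{closed} hypothesis of Corollary~\ref{cor1.5} to the \emph{complete} setting of Theorem~\ref{thm1.7}. The strategy above reduces the issue to showing that the triharmonic equation pins down the third symmetric function of the principal curvatures; granted that, the Cartan theory of isoparametric hypersurfaces of $\mathbb{S}^{4}$ with three distinct principal curvatures supplies the required compactness, and everything else is routine bookkeeping built on Corollaries~\ref{cor1.3}--\ref{cor1.4} and elementary analysis of the cubics $P_{3,m}$.
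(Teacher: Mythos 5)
There is a genuine gap in the three-distinct-principal-curvatures case, and it is precisely the step you flag as "the main obstacle." You propose to show that the triharmonic system, together with the constancy of $H$ and $S$, forces the third elementary symmetric function of the principal curvatures to be constant. But for a CMC hypersurface the triharmonic system is exactly \eqref{tri-1'}--\eqref{tri-2'}, which involves only $H$ and $S$: once these are constant, \eqref{tri-1'} reduces to the single algebraic identity $S^{2}-nS-n^{2}H^{2}=0$ and \eqref{tri-2'} is vacuous. The system carries no further pointwise information about the individual principal curvatures, so it cannot pin down the third symmetric function, and the proposed route to isoparametricity does not close. (Your reduction of the two remaining cases is fine: the umbilical/non-umbilical dichotomy from the constancy of $S-nH^{2}$, the elimination of $m=1$ via the monotonicity of $P_{3,1}$ and the orientation convention, and the analyticity remark are all correct, though the last would only matter if the first step worked.)

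The ingredient you are missing is an external rigidity theorem. The paper first applies Theorem \ref{thm1.1} (automatic here, since a hypersurface $M^{3}\subset\mathbb{S}^{4}$ has at most three distinct principal curvatures) to get constant scalar curvature, and then invokes the Cheng--Wan theorem \cite{CW93}: any \emph{complete} CMC hypersurface with constant scalar curvature in $\mathbb{S}^{4}$ is isoparametric. This is what replaces, in the complete setting, the S.~Chang result \cite{Chang94} underlying Corollary \ref{cor1.5} in the closed setting, and it is exactly the bridge from "constant $H$ and $S$" to "constant principal curvatures" that your argument lacks. With isoparametricity in hand, Cartan's classification reduces everything to checking \eqref{tri-1'} on the explicit families; for three distinct principal curvatures $\cot s$, $\cot(s-\pi/3)$, $\cot(s+\pi/3)$ the equation has no solution, which disposes of that case by direct computation rather than via Corollary \ref{cor1.5}.
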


\begin{rem}
	A biharmonic version of Theorem \ref{thm1.7} was obtaind by Balmu\c{s}-Montaldo-Oniciuc (see \cite[Theorem 3.5]{BMO10}) under the ``compactness" assumption by using a classification result due to S. Chang \cite{Chang93a}, and then it was pointed out (cf. \cite[Theorem 3.10]{Oni12}, \cite[Theorem 4.6 and Remark 4.7]{FO21}) that the ``compactness" can be replaced with ``completeness" based on a classification theorem due to Cheng-Wan \cite{CW93}.
\end{rem}

The paper is organized as follows.
In Sect.~\ref{sect:2}, we introduce some notations and  recall some fundamental concepts and formulae for triharmonic hypersurfaces in space forms. In Sect.~\ref{sect:3}, we use proof by contradiction to prove Theorem \ref{thm1.1}. The main part is dealing with the case that $M$ has 3 distinct principal curvatures. In Sect.~\ref{sect:4}, we discuss the proper CMC triharmonic hypersurfaces in a sphere and prove the various results from Theorem \ref{thm-3} to Theorem \ref{thm1.7}.

\textbf{Acknowledgment}:
The first author was partially supported by Natural Science Foundation of Shannxi Province Grant No.~2020JQ-101 and the Fundamental Research Funds for the Central Universities Grant No.~310201911cx013.
The second author was partially supported by NSFC Grant No.~11831005 and No.~11671224.
The authors would like to thank Professor Haizhong Li for bringing the question to our attention and his valuable suggestions and comments. We are also grateful to Professor C. Oniciuc for giving helpful comments on the first version of the paper and bringing some references to our attention, which help us to add Corollary \ref{cor1.5} and improve Theorem \ref{thm1.7}.

\section{Preliminaries and notations}\label{sect:2}
\subsection{Fundamental formulae of hypersurfaces in 
$\R$}
Let $M$ be an $n$-dimensional hypersurface in the space form $\R$. We denote the Levi-Civita connections on $M$ and $\R$ by $\nabla$ and $\bar{\nabla}$ respectively.
For the tangent vector fields $X, Y, Z, W$ and the unit normal  vector field $\xi$ on $M$, the Riemannian curvature tensor of $M$, the Gauss and Weingarten formulae are respectively given by 
\begin{align*}
	R(X,Y,Z,W)&=\l R(X,Y)W, Z\r\\
	\bar{\nabla}_XY&=\nabla_XY+h(X,Y)\xi,\\
	\bar{\nabla}_{X}\xi&=-A(X).
\end{align*}
Here $	R(X,Y)Z=(\nabla_X\nabla_Y-\nabla_Y\nabla_X-\nabla_{[X,Y]})Z$, $h$ and $A$ represent the second fundamental form of $M$ and the shape operator respectively. 

In the rest of this paper, the ranges of indices  $i,j,k, \ldots$ is from 1 to $n$ except special declaration.

Now we choose an orthonormal frame $\{e_i\}_{i=1}^n$ of $M$ and suppose $\nabla_{e_i}e_j=\sum_k\Gamma_{ij}^{k}e_k$.

Denote $R_{ijkl}=R(e_i,e_j,e_k,e_l), h_{ij}=h(e_i,e_j), h_{ijk}=e_kh_{ij}-h(\nabla_{e_k}e_i,e_j)-h(e_i,\nabla_{e_k}e_j)$, then we have
\begin{align}
R_{ijkl}&=(e_i\Gamma_{jl}^{k}-e_j\Gamma_{il}^{k})+\sum_{m}(\Gamma_{jl}^{m}\Gamma_{im}^{k}-\Gamma_{il}^{m}\Gamma_{jm}^{k}-(\Gamma_{ij}^{m}-\Gamma_{ji}^{m})\Gamma_{ml}^{k}),\label{eq-curva}\\
h_{ijk}&=e_kh_{ij}-\sum_{l}(\Gamma_{ki}^{l}h_{lj}+\Gamma_{kj}^{l}h_{il}).\label{eq-hijk}
\end{align}
Now the Gauss-Codazzi equations can be written down as follows respectively:
\begin{align}\label{gauss}
R_{ijkl}&=(\delta_{ik}\delta_{jl}-\delta_{il}\delta_{jk})c+( h_{ik}h_{jl}-h_{il}h_{jk}),\\
\label{codazzi}
h_{ijk}&=h_{ikj}.
\end{align}

We denote $S=\sum\limits_{i,j}(h_{ij})^2$ the
 squared norm of the second fundamental form, $H=\frac{1}{n}
(\sum\limits_i h_{ii})$ the mean curvature  function of $M$.

\subsection{Triharmonic hypersurfaces in $\R$}

Consider $\phi: (M^{n},g)\to (N,\bar{g})$. 
From the first variational formula of $E_k$ (cf. \cite{Wang1989}), we have 
\begin{equation*}
\tau_3(\phi)=\barlp^{2}\tau(\phi)-\sum_{i}\bar{R}\big(\barlp\tau(\phi),d\phi(e_i)\big)d\phi(e_i)-\sum_{i}\bar{R}\big(\nabla^{\phi}_{e_i}\tau(\phi),\tau(\phi)\big)d\phi(e_i),
\end{equation*}
where $\nabla^{\phi}$ is the induced connection on the bundle $\phi^{-1}TN$ and  $\barlp=-\sum_{i}(\nabla^{\phi}_{e_i}\nabla^{\phi}_{e_i}-\nabla^{\phi}_{\nabla_{e_i}e_i})$ is the \emph{rough Laplacian} on the section of $\phi^{-1}TN$. Then by direct computation, one can derive that a CMC hypersurface of $\R$ is triharmonic if and only if (see \cite[Eq. (2.8)]{MOR19})
\begin{equation}\label{tri-cmc}
\begin{cases}
H(\Delta S+S^2-cnS-cn^{2}H^{2})=0, \\
HA(\nabla S)=0.
\end{cases}
\end{equation}
We remark that the sign of  $\Delta=-\sum_{i}(\nabla_{e_i}\nabla_{e_i}-\nabla_{\nabla_{e_i}e_i})$.

Obviously \eqref{tri-cmc} always holds for $H=0$, which is the trivial case. From now on, we assume that $M$ is not minimal, then \eqref{tri-cmc} becomes
\begin{subnumcases}{}
(\Delta S+S^2-cnS-cn^{2}H^{2})=0 \label{tri-1'}\\
A(\nabla S)=0.\label{tri-2'}
\end{subnumcases}

\section{Proof of Theorem \ref{thm1.1}}\label{sect:3}
In this section, we will prove Theorem \ref{thm1.1}. The strategy is that 
we suppose $\nabla S\neq 0$ and then derive a contradiction. 

If $\nabla S\neq 0$, then  \eqref{tri-2'}  implies that $\nabla S$ is a principal direction with the corresponding principal curvature $0$. Without loss of generality, we can choose an orthonormal frame $\{e_i\}$ such that $e_1$ is parallel to $\nabla S$ and the shape operator $A$ is diagonalized with respect to $\{e_i\}$, i.e.,  $h_{ij}=\lambda_i\delta_{ij}$, where $\lambda_i$ is the principal curvature and $\lambda_1=0$.

\subsection{At most two distinct principal curvatures}
If all principal curvatures of $M$ are the same, the $\lambda_i=0$ and $S=\sum_i\lambda_i^{2}=0$, which is a contradiction.

If $M$ has two distinct principal curvatures, then without loss of generality, we can assume that $\lambda_1=\cdots=\lambda_s=0, \lambda_{s+1}=\cdots=\lambda_n=\lambda\neq 0 (1\le s\le n-1)$. But $nH=(n-s)\lambda$ implies $\lambda$ is constant, so we obtain $S$ is constant, again a contradiction.

\subsection{Three distinct principal curvatures}
Next, we focus on the case that $M$ has three distinct principal curvatures. Without loss of generality, we suppose
\begin{equation*}
\lambda_1=\cdots=\lambda_r=0,\quad  \lambda_{r+1}=\cdots=\lambda_{r+s}=\lambda,\quad \lambda_{r+s+1}=\cdots=\lambda_{r+s+t}=\mu,
\end{equation*}
where $\lambda\neq \mu, \lambda\neq 0,\mu\neq 0; r\ge 1, s\ge 1, t\ge 1, r+s+t=n\ge 3$.

For convenience, we denote $I_1=\{1,\cdots,r\},I_2=\{r+1,\cdots,r+s\},I_3=\{r+s+1,\cdots,n\}$.
At first, we have
\begin{lem}\label{lem1}
Let $\alpha,\beta\in \{1,2,3\}$.
Then the coefficient $\Gamma_{ij}^{k}$ satisfies:
\begin{enumerate} 
	\item $\Gamma_{ij}^{k}=-\Gamma_{ik}^{j}$. \label{item:1}
	\item \label{item:2} $\Gamma_{ii}^{k}=\frac{e_k\lambda_i}{\lambda_i-\lambda_k}$ for $i\in I_\alpha$ and $k\notin I_\alpha$.
	\item $\Gamma_{ij}^{k}=\Gamma_{ji}^{k}$ if the indices satisfy one of the following conditions:
	\begin{enumerate}[(3a)] \label{item:3a}
		\item $i,j\in I_\alpha, k\notin I_\alpha$ ;
		\item $i,j\ge 2$ and $k=1$. \label{item:3b}
	\end{enumerate}
	\item $\Gamma_{ij}^{k}=0$ if the indices satisfy one of the following conditions:
	\begin{enumerate}[(4a)]
		\item $j=k$; \label{item:4a}
		\item $i=j\notin I_1, k\in I_1 $ and $k\ge 2$; \label{item:4b}
		\item $i=j\in I_1$ and $k\notin I_1$; \label{item:4c}
		\item $i,k\in I_\alpha, i\neq k$ and $j\notin I_\alpha$; \label{item:4d}
		\item $i,j\ge 2, i\in I_\alpha, j\in I_\beta $ with $\alpha\neq \beta$ and $k=1$. \label{item:4e}
	\end{enumerate}
\end{enumerate}

\end{lem}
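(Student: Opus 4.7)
The proof rests on three tools: the metric compatibility and torsion-freeness of $\nabla$; the total symmetry of $h_{ijk}$ arising from the Codazzi equation \eqref{codazzi} together with $h_{ij}=h_{ji}$; and the geometric constraints that $e_1\parallel \nabla S$ with $\lambda_1=0$ and $e_j(S)=0$ for $j\ge 2$. In addition, the CMC hypothesis gives the linear relation $s\,e_k\lambda+t\,e_k\mu=0$ between the directional derivatives of the two nonzero principal curvatures.

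Item (1), and hence (4a), come directly from differentiating the orthonormality of the frame. For (2), expanding $h_{iik}$ and $h_{iki}$ via \eqref{eq-hijk} and the diagonal form of $h$ yields $h_{iik}=e_k\lambda_i$ and $h_{iki}=\Gamma_{ii}^k(\lambda_i-\lambda_k)$; Codazzi identifies them and one solves for $\Gamma_{ii}^k$ since $\lambda_i\ne \lambda_k$. Item (4c) is the special case $\lambda_i=0$, and (4b) follows by observing that $e_k\lambda=e_k\mu=0$ for $k\ge 2$---read off from the CMC identity and $e_k(S)=0$ using $\lambda\ne\mu$---and plugging into (2). For (3a) and (4d), I compute $h_{ikj}$ and $h_{jki}$ when $i,j\in I_\alpha$ and $k\notin I_\alpha$: total symmetry gives $\Gamma_{ij}^k=\Gamma_{ji}^k$ when $i\ne j$; when instead $i,k\in I_\alpha$ are distinct and $j\notin I_\alpha$, the expansion of $h_{ikj}$ collapses to zero via (1), and from $h_{ijk}=0$ one obtains $\Gamma_{ki}^j(\lambda_i-\lambda_j)=0$, which combined with (3a) and (1) yields (4d). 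For (3b), evaluating $\mathrm{Hess}(S)(e_i,e_j)=-\Gamma_{ij}^1\,e_1(S)$ for $i,j\ge 2$ (all other contributions vanish since $e_k(S)=0$ for $k\ge 2$) and invoking symmetry of the Hessian together with $e_1(S)\ne 0$ gives the claim.

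I expect item (4e) to be the main obstacle, as it requires juggling several Codazzi relations against the identity $\Gamma_{i1}^j=\Gamma_{j1}^i$ (which follows from (3b) and (1)). When $i\in I_1$ with $i\ge 2$ and $j\notin I_1$, expanding $h_{1ij}$ shows it vanishes because each term carries a factor of $\lambda_1=0$ or $\lambda_i=0$, while equating $h_{ij1}$ with $h_{1ji}$ yields $\Gamma_{1i}^j=\Gamma_{i1}^j$; together these force $\Gamma_{i1}^j=0$, whence $\Gamma_{ij}^1=0$ via (1). When $i\in I_2$ and $j\in I_3$, I equate $h_{ij1}$ with $h_{1ij}$ and similarly with indices swapped; collapsing the two resulting identities through $\Gamma_{i1}^j=\Gamma_{j1}^i$ produces $\Gamma_{i1}^j(\lambda_i-\lambda_j)=0$, so $\Gamma_{ij}^1=0$ again. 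The subtlety is in choosing precisely which Codazzi relations to compare, but once (3b) is available the remaining algebra is elementary.
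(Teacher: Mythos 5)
Your proposal is correct and follows essentially the same route as the paper: items (1) and (4a) from orthonormality, everything in (2), (3a), (4b)--(4d) from the Codazzi symmetry of $h_{ijk}$ specialized to the diagonal frame together with $e_k\lambda=e_k\mu=0$ for $k\ge 2$, item (3b) from the symmetry of $\operatorname{Hess}S$ (the paper phrases this as $[e_i,e_j]S=0$, which is the identical computation), and (4e) from combining the Codazzi relation with $k=1$ against (3b). The only cosmetic difference is that you split (4e) into subcases and chase slightly different permutations of $h_{ijk}$, whereas the paper packages the needed identities once as its equations (3.1)--(3.2) and reads each item off uniformly.
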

\begin{proof}
	Item \eqref{item:1} is directly from $e_i\l e_j,e_k \r=0$, which also implies Item \eqref{item:4a}.
	
	By  \eqref{eq-hijk}, \eqref{codazzi} and the Item \eqref{item:1}, we have 
	\begin{equation}\label{eq-1}
	e_k\lambda_i=h_{iik}=h_{iki}=\Gamma_{ii}^{k}(\lambda_i-\lambda_k) \mbox{ for $i\neq k$,}
	\end{equation}
	\begin{equation}\label{eq-2}
	\Gamma_{ij}^{k}(\lambda_j-\lambda_k)=h_{kji}=h_{kij}=\Gamma_{ji}^{k}(\lambda_i-\lambda_k) \mbox{ for distinct $i,j,k$.}
	\end{equation}
	
	Item \eqref{item:2} follows from \eqref{eq-1} directly.
	
	Since $nH=s\lambda+t\mu$ is constant, we derive that for each $k$, 
	\begin{equation}\label{eq-3}
	se_k\lambda+te_k\mu=0.
	\end{equation} 
	On the other hand, $S=\sum_{i}\lambda_i^{2}=s\lambda^{2}+t\mu^2$, then for $k\ge 2$, $e_k S=0$ implies
	\begin{equation}\label{eq-4}
		s \lambda e_k\lambda + t \mu e_k\mu=0.
	\end{equation}
	From \eqref{eq-3} and \eqref{eq-4} it follows that 
	\begin{equation*}
	e_k\lambda=e_k\mu=0 \mbox{ for $k\ge 2$}
	\end{equation*} as $\lambda\neq \mu$. Hence, we obtain Item \eqref{item:4b} from Item \eqref{item:2}.
	
	Item \eqref{item:4c} is directly from \eqref{eq-1} since $e_k\lambda_i=0$ for $i\in I_1$.
	
	Item \eqref{item:3a} is directly from \eqref{eq-2} since $\lambda_j-\lambda_k=\lambda_i-\lambda_k\neq 0$.
 
	By the assumptions and the choice of $\{e_i\}$, we have $e_1S\neq 0, e_iS=0 (2\le i \le n)$. Then for $i,j\in\{2,\cdots,n\}, 0=(e_ie_j-e_ie_i)S=[e_i,e_j]S=(\nabla_{e_i}e_j-\nabla_{e_j}e_i)S=(\Gamma_{ij}^{1}-\Gamma_{ji}^{1})e_1S$, so $\Gamma_{ij}^{1}=\Gamma_{ji}^{1}.$ So we obtain Item \eqref{item:3b}.
	
	Again from \eqref{eq-2}, we have $\Gamma_{ij}^{k}(\lambda_j-\lambda_k)=0$ for $i,k\in I_\alpha, j\notin I_\alpha, i\neq k$. Since $\lambda_j-\lambda_k\neq 0$, we obtain Item \eqref{item:4d}. 
	
	If $i,j\ge 2$, then from \eqref{eq-2} and Item \eqref{item:3b} we can derive $\Gamma_{ij}^{1}(\lambda_j-\lambda_i)=0$. Since $i\in I_\alpha,j\in I_\beta$ and $\alpha\neq \beta$, we have $\lambda_j-\lambda_i\neq 0$ and then obtain Item \eqref{item:4e}. 
\end{proof}

\begin{lem}\label{lem2}
	Denote $P=\frac{e_1\lambda}{\lambda}, Q=\frac{e_1\mu}{\mu}$. Then we have
	\begin{equation}\label{eq-lem2}
	e_1P-P^{2}=c,\quad e_1Q-Q^{2}=c.
	\end{equation}
\end{lem}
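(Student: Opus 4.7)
My plan is to compute the curvature component $R_{1i1i}$ in two ways for $i\in I_{2}$ and then compare; the second identity in \eqref{eq-lem2} follows by the same procedure applied to any $j\in I_{3}$. On one hand, the Gauss equation \eqref{gauss} together with $\lambda_{1}=0$ gives
\begin{equation*}
R_{1i1i}=c+h_{11}h_{ii}-h_{1i}^{2}=c.
\end{equation*}
On the other hand, I will expand \eqref{eq-curva} with $(i,j,k,l)=(1,i,1,i)$ and show, after heavy cancellation, that $R_{1i1i}=e_{1}P-P^{2}$.

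The preparation step is to collect the relevant Christoffel symbols. Item~\eqref{item:2} of Lemma~\ref{lem1} with $k=1$ yields $\Gamma_{ii}^{1}=P$ for every $i\in I_{2}$, and Item~\eqref{item:1} then gives $\Gamma_{i1}^{i}=-\Gamma_{ii}^{1}=-P$. To handle the potential cross contributions indexed by $m\in I_{2}\setminus\{i\}$, I would apply the Codazzi equation $h_{ij1}=h_{i1j}$ to distinct $i,j\in I_{2}$; using $h_{ij}=0$ for $i\neq j$ and $\lambda_{1}=0$, this reduces to
\begin{equation*}
\Gamma_{1i}^{j}+\Gamma_{1j}^{i}=\Gamma_{i1}^{j}.
\end{equation*}
Combined with the antisymmetry $\Gamma_{1j}^{i}=-\Gamma_{1i}^{j}$ from Item~\eqref{item:1}, this forces $\Gamma_{i1}^{j}=0$, whence $\Gamma_{ij}^{1}=-\Gamma_{i1}^{j}=0$ by Item~\eqref{item:1} again. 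The parallel argument for $I_{3}$ gives $\Gamma_{jk}^{1}=0$ for distinct $j,k\in I_{3}$.

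With these identities I would now substitute into \eqref{eq-curva} and track which summation indices $m$ contribute. All $m=1$ terms vanish because $\Gamma_{1i}^{1}=0$, $\Gamma_{i1}^{1}=0$ and $\Gamma_{11}^{1}=0$, by Items~\eqref{item:1},~\eqref{item:2} and~\eqref{item:4a}. For $m\in I_{2}\setminus\{i\}$, each of $\Gamma_{im}^{1}$ and $\Gamma_{mi}^{1}$ is zero by the previous paragraph, and $\Gamma_{1m}^{1}=-\Gamma_{11}^{m}=0$ by Item~\eqref{item:2}. For $m\in I_{3}$ or $m\in I_{1}\setminus\{1\}$, $\Gamma_{ii}^{m}=0$ by Item~\eqref{item:2} (since $e_{m}\lambda=0$ for $m\geq 2$), and $\Gamma_{im}^{1}=\Gamma_{mi}^{1}=0$ by Item~\eqref{item:4e}. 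Only $m=i$ survives: the derivative block contributes $e_{1}\Gamma_{ii}^{1}=e_{1}P$, and the bracket block contributes $-(\Gamma_{1i}^{i}-\Gamma_{i1}^{i})\Gamma_{ii}^{1}=-\bigl(0-(-P)\bigr)P=-P^{2}$. Hence $R_{1i1i}=e_{1}P-P^{2}$, and comparison with $R_{1i1i}=c$ yields the first identity in \eqref{eq-lem2}.

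The main obstacle is that $\Gamma_{ij}^{k}$ with all three indices belonging to $I_{2}$ (or all to $I_{3}$) is not covered by the vanishing statements of Lemma~\ref{lem1}, so a priori such symbols could contaminate the expansion of $R_{1i1i}$. The Codazzi-plus-antisymmetry trick in the second paragraph is what removes them cleanly, without having to invoke any rotation of the frame within an eigenspace of the shape operator.
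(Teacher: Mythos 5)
Your proof is correct and follows essentially the same route as the paper: compute $R_{1i1i}$ once from the Gauss equation and once from \eqref{eq-curva}, then use Lemma \ref{lem1} to annihilate every term except $e_1P-P^{2}$. The only deviation is your Codazzi-plus-antisymmetry derivation of $\Gamma_{i1}^{j}=0$ for distinct $i,j\in I_{2}$, which is already covered by Item \eqref{item:4d} of Lemma \ref{lem1} (with $i,j\in I_{2}$ as the first and third indices and the middle index $1\in I_{1}$), so that detour is redundant but harmless.
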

\begin{proof}
	From \eqref{gauss} and \eqref{eq-curva}, for $j\in I_2$, we 
	have 
	\begin{equation}\label{eq-7}
	 c=R_{1j1j}=(e_1\Gamma_{jj}^{1}-e_j\Gamma_{1j}^{1})+\sum_{m}(\Gamma_{jj}^{m}\Gamma_{1m}^{1}-\Gamma_{1j}^{m}\Gamma_{jm}^{1}-\Gamma_{1j}^{m}\Gamma_{mj}^{1}+\Gamma_{j1}^{m}\Gamma_{mj}^{1}).
	\end{equation}
	By Items \eqref{item:1}, \eqref{item:2} and \eqref{item:4c} of Lemma \ref{lem1}, we have
	\begin{equation*}
	e_1\Gamma_{jj}^{1} -e_j\Gamma_{1j}^{1}=e_1P+e_j\Gamma_{11}^{j}=e_1P.
	\end{equation*} 
	By Items \eqref{item:4a}, \eqref{item:4b} and \eqref{item:4c} of Lemma \ref{lem1}, we have
	\begin{align*}
	\sum_m\Gamma_{jj}^{m}\Gamma_{1m}^{1}=\Gamma_{jj}^{1}\Gamma_{11}^{1}+\sum_{m\in I_1,m\ge 2}\Gamma_{jj}^{m}\Gamma_{1m}^{1}-\sum_{m\notin I_1}\Gamma_{jj}^{m}\Gamma_{11}^{m}=0.
	\end{align*}
	By Items \eqref{item:1}, \eqref{item:4a}, \eqref{item:4e} and \eqref{item:4d} of Lemma \ref{lem1}, we have
	\begin{align*}
	\sum_m\Gamma_{1j}^{m}\Gamma_{jm}^{1}=\sum_{m\ge 2, m\notin I_2}\Gamma_{1j}^{m}\Gamma_{jm}^{1}-\sum_{m\in I_2,m\neq j}\Gamma_{1j}^{m}\Gamma_{j1}^{m}=0.
	\end{align*}
	By Items \eqref{item:1}, \eqref{item:4a}, \eqref{item:4c}, \eqref{item:4e} and \eqref{item:4d} of Lemma \ref{lem1} of Lemma \ref{lem1}, we have
	\begin{align*}
	\sum_m\Gamma_{1j}^{m}\Gamma_{mj}^{1}=(\Gamma_{11}^{j})^{2}+\sum_{m\ge 2, m\notin I_2}\Gamma_{1j}^{m}\Gamma_{mj}^{1}-\sum_{m\in I_2,m\neq j}\Gamma_{1j}^{m}\Gamma_{m1}^{j}=0.
	\end{align*}
	By Items \eqref{item:1}, \eqref{item:4a}, \eqref{item:4e}, \eqref{item:4d} and \eqref{item:2} of Lemma \ref{lem1},
	\begin{align*}
	\sum_m\Gamma_{j1}^{m}\Gamma_{mj}^{1}=-\sum_{m\ge 2, m\notin I_2}(\Gamma_{jm}^{1})^{2}-\sum_{m\in I_2,m\neq j}(\Gamma_{j1}^{m})^{2}-(\Gamma_{jj}^{1})^{2}=-P^{2}.
	\end{align*}
	By putting the above equations into \eqref{eq-7}, we obtain the first equation of \eqref{eq-lem2}. The second equation of \eqref{eq-lem2} is from an analogous computation.	
\end{proof}

Now we are ready to derive the contradiction by  proving $\lambda$ is constant. 
Firstly, we have $e_1P=\frac{e_1e_1\lambda}{\lambda}-(\frac{e_1\lambda}{\lambda})^{2}=\frac{e_1e_1\lambda}{\lambda}-P^{2}$, then the first equation of \eqref{eq-lem2} gives
\begin{equation}\label{eq3-1}
	e_1e_1\lambda-2\lambda P^{2}=c\lambda.
\end{equation}
Similarly, the second equation of \eqref{eq-lem2} gives
\begin{equation}\label{eq3-2}
e_1e_1\mu-2\mu Q^{2}=c\mu.
\end{equation}
	Since $nH=s\lambda+t\mu$ is constant, 
we have
\begin{equation}\label{eq3-3}
se_1\lambda+te_1\mu=s\lambda P+t\mu Q=0.
\end{equation}
From \eqref{eq3-1} and \eqref{eq3-2}, we derive
\begin{equation}\label{eq3-4}
s\lambda P^{2}+t \mu Q^{2}=-cnH/2.
\end{equation}
Eliminating $Q$ in \eqref{eq3-4} by using \eqref{eq3-3}, we obtain
\begin{equation*}
-cnH/2=s\lambda P^{2}+t\mu (\frac{s\lambda}{t\mu})^{2}P^{2}=s\lambda P^{2}(1+\frac{s\lambda}{t\mu})=s\lambda P^{2}(\frac{nH}{t\mu}),
\end{equation*}
which is equivalent to
\begin{equation}\label{eq3-5}
	 \frac{s\lambda}{t\mu}P^{2}=-c/2
\end{equation}
as $H\neq 0$.
Differentiating \eqref{eq3-5} along $e_1$, by using $e_1\lambda=\lambda P, e_1P=c+P^{2}$ and \eqref{eq3-5}, we derive
\begin{align}
	0&=e_1\Big(\frac{s\lambda}{nH-s\lambda}\Big)P^{2}+2\Big(\frac{s\lambda}{nH-s\lambda}\Big)Pe_1P\nonumber\\
	&=\frac{s(nH-s\lambda)e_1\lambda+s\lambda se_1\lambda}{(nH-s\lambda)^{2}}P^{2}+2\Big(\frac{s\lambda}{nH-s\lambda}\Big)P(c+P^{2})\nonumber\\
	&=\frac{se_1\lambda }{(nH-s\lambda)^{2}}\Big[(nH-s\lambda) P^{2}+s\lambda P^{2}+2(nH-s\lambda)(c+P^{2})\Big]\nonumber\\
	&=\frac{se_1\lambda }{(nH-s\lambda)^{2}}(t\mu(2c+3P^{2})-t\mu c/2).\label{eq3-8}
\end{align}
If $e_1\lambda=\lambda P= 0$, then both $\lambda$ and $\mu$ are constant, a contradiction to the initial assumption $\nabla S\neq 0$. Hence, \eqref{eq3-8} is equivalent to 
\begin{equation*}
3P^{2}+3c/2=0,
\end{equation*}
which means $P^{2}$ equals to a constant $-c/2$.
Now the first equation of \eqref{eq-lem2} becomes $0-P^{2}=c$ as $P$ is constant, which is impossible when $c\neq 0$. But when $c=0$, we have $P=0$. We obtain the contradiction again.

In conclusion, we complete the proof of Theorem \ref{thm1.1}.

Corollary \ref{cor1} follows from \eqref{tri-1'} by applying Theorem \ref{thm1.1}.
 
\section{Proper $k$-harmonic hypersurfaces in a sphere}\label{sect:4}
In this section, we study proper CMC $k$-harmonic  hypersurfaces in a sphere. 
Keep in mind that ``proper'' means $H\neq 0$. 
Theorem 1.10 of \cite{MOR19} says that a proper CMC hypersurface with constant scalar curvature in $\mathbb{S}^{n+1}$ is $k$-harmonic $(k\ge 3)$ if and only if 
\begin{equation*}
	S^{2}-nS-(k-2)n^{2}H^{2}=0,
\end{equation*}
from which we can solve out $S$ and obtain
\begin{equation}\label{eq4-1}
S=\frac{n}{2}\big(1+\sqrt{1+4(k-2)H^{2}}\big).
\end{equation}
By Cauchy-Schwarz inequality, $S\ge nH^{2}$. Putting this into \eqref{eq4-1}, we derive that
\begin{equation}\label{eq4-2}
\sqrt{1+4(k-2)H^{2}}\ge 2H^{2}-1.
\end{equation}
When $H^{2}\le 1/2$, \eqref{eq4-2} obviously holds. When $H^{2}> 1/2$, \eqref{eq4-2} is equivalent to
\begin{equation*}
1+4(k-2)H^{2}\ge (2H^{2}-1)^{2}=4H^{4}-4H^{2}+1,
\end{equation*}	
which implies $H^{2}\le k-1$. Moreover, it is easy to see that $S=nH^{2}$ if and only if $H^{2}=k-1$.
So far, we have given a quick review of the proof of Theorem \ref{thm-MOR}, and we see that if $M$ is not totally umbilical (i.e., $S>nH^{2}$), then $H^{2}<k-1$. 

To prove Theorem \ref{thm-3}, we need the following lemma.
	\begin{lem}\label{lem-rigidity}
			Let $M$ be an $n$-dimensional non-minimal hypersurface in $\mathbb{S}^{n+1}$ with both $H$ and $S$ are constant. If
		\begin{equation}\label{cond-2}
		nH^{2}< S\le n+\frac{n^{3}H^{2}}{2(n-1)}-\frac{n(n-2)}{2(n-1)}\sqrt{n^{2}H^{4}+4(n-1)H^{2}},
		\end{equation}
		then 
		\begin{equation}\label{result-1}
		S= n+\frac{n^{3}H^{2}}{2(n-1)}-\frac{n(n-2)}{2(n-1)}\sqrt{n^{2}H^{4}+4(n-1)H^{2}},
		\end{equation}
			and locally $M=\mathbb{S}^{n-1}(a)\times\mathbb{S}^{1}(\sqrt{1-a^{2}})$ with the principal curvatures given by
		\begin{equation}\label{eq-pri-cur}
		\lambda_1=\cdots=\lambda_{n-1}=\frac{nH+\sqrt{n^{2}H^{2}+4(n-1)}}{2(n-1)}, \quad \lambda_n=\frac{nH-\sqrt{n^{2}H^{2}+4(n-1)}}{2}.
		\end{equation} 
	\end{lem}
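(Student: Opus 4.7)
The plan is to view Lemma \ref{lem-rigidity} as a specialization of the classical rigidity theorem of Alencar--do Carmo \cite{AC94} for CMC hypersurfaces in spheres, with the critical bound computed explicitly and matched to \eqref{cond-2}. I would begin by introducing the traceless tensor $\phi := A - HI$, so that $|\phi|^{2} = S - nH^{2} =: \sigma \ge 0$. Since both $H$ and $S$ are constant, Simons' formula for a CMC hypersurface in $\mathbb{S}^{n+1}$ together with $\Delta S = 0$ yields
\begin{equation*}
|\nabla A|^{2} = S^{2} - nS + n^{2}H^{2} - nH\tr(A^{3}) \ge 0,
\end{equation*}
i.e.\ $nH\tr(A^{3}) \le S^{2} - nS + n^{2}H^{2}$. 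Expanding $\tr(A^{3}) = \tr(\phi^{3}) + 3H\sigma + nH^{3}$ and applying the Okumura inequality $\tr(\phi^{3}) \ge -\frac{n-2}{\sqrt{n(n-1)}}\sigma^{3/2}$ (valid under the orientation convention $H \ge 0$), an elementary rearrangement in terms of $\sigma$ produces
\begin{equation*}
\sigma\, P_{H}(\sqrt{\sigma}) \ge 0, \qquad P_{H}(x) := x^{2} + \frac{n(n-2)H}{\sqrt{n(n-1)}}x - n(1+H^{2}).
\end{equation*}

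Next, since $P_{H}(0) = -n(1+H^{2}) < 0$ and the leading coefficient is positive, $P_{H}$ admits a unique positive root
\begin{equation*}
x_{H} = \frac{1}{2}\sqrt{\frac{n}{n-1}}\Bigl(\sqrt{n^{2}H^{2}+4(n-1)} - (n-2)H\Bigr),
\end{equation*}
so the inequality above forces $\sigma = 0$ or $\sigma \ge x_{H}^{2}$. Squaring $x_{H}$ and using $H\sqrt{n^{2}H^{2}+4(n-1)} = \sqrt{n^{2}H^{4}+4(n-1)H^{2}}$, a direct algebraic manipulation gives
\begin{equation*}
nH^{2} + x_{H}^{2} = n + \frac{n^{3}H^{2}}{2(n-1)} - \frac{n(n-2)}{2(n-1)}\sqrt{n^{2}H^{4}+4(n-1)H^{2}},
\end{equation*}
which is precisely the upper bound appearing in \eqref{cond-2}. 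Under the hypothesis $nH^{2} < S \le nH^{2} + x_{H}^{2}$ (i.e.\ $0 < \sigma \le x_{H}^{2}$), we are forced to have $\sigma = x_{H}^{2}$, establishing \eqref{result-1}.

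Finally, every inequality in the chain must be an equality: $|\nabla A| \equiv 0$, so the second fundamental form of $M$ is parallel; and Okumura equality says that $\phi$ has exactly two distinct eigenvalues with multiplicities $n-1$ and $1$. By the classification of hypersurfaces of $\mathbb{S}^{n+1}$ with parallel shape operator and two distinct principal curvatures (Cartan/Ryan/Lawson), $M$ must be locally the generalized Clifford torus $\mathbb{S}^{n-1}(a) \times \mathbb{S}^{1}(\sqrt{1-a^{2}})$, whose principal curvatures satisfy $\lambda_{1} = \cdots = \lambda_{n-1}$, $\lambda_{1}\lambda_{n} = -1$, and $(n-1)\lambda_{1} + \lambda_{n} = nH$. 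Eliminating $\lambda_{n}$ yields $(n-1)\lambda_{1}^{2} - nH\lambda_{1} - 1 = 0$, whose positive root together with $\lambda_{n} = nH - (n-1)\lambda_{1}$ produces \eqref{eq-pri-cur}. I expect the only technical hurdle to be bookkeeping: the explicit identification of $x_{H}^{2}$ with the bound in \eqref{cond-2}, which is a careful but entirely routine application of the quadratic formula.
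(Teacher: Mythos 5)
Your proof is correct and is essentially the paper's argument: the paper invokes H.~Li's rigidity result via the operator $\square$, and when $H$ and $S$ are both constant that identity reduces exactly to your Simons-formula computation $|\nabla A|^{2}=S^{2}-nS+n^{2}H^{2}-nH\tr(A^{3})$, while the quoted lower bound for $\tfrac12\sum_{i,j}R_{ijij}(\lambda_i-\lambda_j)^{2}$ is precisely the Okumura estimate $-\sigma P_{H}(\sqrt{\sigma})$ in factored form. The only difference is that you carry out the estimate in a self-contained way rather than citing \cite{Li1997}; the equality analysis, the identification of the critical value $nH^{2}+x_{H}^{2}$ with the bound in \eqref{cond-2}, and the determination of the principal curvatures all coincide with the paper's.
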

	\begin{proof}
		Essentially this lemma is a special case of a rigidity result due to H. Li \cite[Corollary 2.1]{Li1997}, and we reproduce the main steps in H. Li's proof  for the readers' convenience. We also remark that, both $H$ and $S$ being constant is a strong condition so that we don't need the assumption of compactness of $M$ which is required in H. Li's original results.
		
		We choose an orientation such that $H\ge 0$. Now consider the operator $\square$ defined by 
		\begin{equation*}
			\square f=\sum_{i,j}\Big(nH\delta_{ij}-h_{ij}\Big)f_{ij} \text{ for $f\in C^{2}(M)$},
		\end{equation*}
		then we have (cf. \cite[Eq. (1.17), Eq. (2.14) and Eq. (2.15)]{Li1997})
		\begin{align*}
		\square (nH)&=-\frac{1}{2}\Delta (n^{2}H^{2}-S)+|\nabla A|^{2}-|\nabla(nH)|^{2}+\frac{1}{2}\sum_{i,j}R_{ijij}(\lambda_i-\lambda_j)^{2}\\
		&\ge -\frac{1}{2}\Delta (n^{2}H^{2}-S)+|\nabla A|^{2}-|\nabla(nH)|^{2}+(S-nH^{2})\\
		&\qquad \times\left(\sqrt{S-nH^{2}}+\frac{n-2}{2}\sqrt{\frac{n}{n-1}}H+\sqrt{n+\frac{n^{3}H^{2}}{4(n-1)}}\right)\nonumber\\
		&\qquad\times\left(-\sqrt{S-nH^{2}}-\frac{n-2}{2}\sqrt{\frac{n}{n-1}}H+\sqrt{n+\frac{n^{3}H^{2}}{4(n-1)}}\right)
		\end{align*}
		Since $H$ and $S$ are constant, considering \eqref{cond-2}, the above inequality becomes 
		\begin{align*}
		0&\ge |\nabla A|^{2}+(S-nH^{2})\\
		&\qquad \times\left(\sqrt{S-nH^{2}}+\frac{n-2}{2}\sqrt{\frac{n}{n-1}}H+\sqrt{n+\frac{n^{3}H^{2}}{4(n-1)}}\right)\\
		&\qquad\times\left(-\sqrt{S-nH^{2}}-\frac{n-2}{2}\sqrt{\frac{n}{n-1}}H+\sqrt{n+\frac{n^{3}H^{2}}{4(n-1)}}\right)\ge 0.
		\end{align*}
		Hence, \eqref{result-1} holds and all the inequalities above must become equations. Then we have $|\nabla A|^{2}=0$ and there are $(n-1)$ of the principal curvatures $\lambda_{i}$'s are the same (cf. \cite[Lemma 1.3]{Li1997}). Without loss of generality, we suppose $\lambda_{1}=\cdots=\cdots\lambda_{n-1}\neq\lambda_{n}$. Then \eqref{eq-pri-cur} follows from $(n-1)\lambda_{1}+\lambda_{n}=nH$ and $R_{1n1n}=\lambda_{1}\lambda_{n}+1=0$, and the readers can find the details in 
		\cite[Sect.~2]{Li1997}. The radius $a$ can be determined by \eqref{eq-prin-cur-1} and \eqref{eq-pri-cur}.
	\end{proof}

	\begin{proof}[Proof of Theorem \ref{thm-3}]  
	According to Lemma \ref{lem-rigidity}, it is sufficient to find suitable values of $H$ such that 
	\begin{equation}\label{cond-2'}
	S\ge n+\frac{n^{3}H^{2}}{2(n-1)}-\frac{n(n-2)}{2(n-1)}\sqrt{n^{2}H^{4}+4(n-1)H^{2}}.
	\end{equation} 
	For convenience, we denote $H^{2}=t$.
	Recall \eqref{eq4-1}, we obtain \eqref{cond-2'} holds if and only 
	\begin{equation}\label{eq4-4}
		(n-1)\sqrt{1+4(k-2)t}+(n-2)\sqrt{n^{2}t^{2}+4(n-1)t}\ge (n-1)+n^{2}t.
	\end{equation}
	Square both sides of \eqref{eq4-4} and simplify it, then we obtain
	\begin{equation}\label{eq4-5}
		(n-2)\sqrt{1+4(k-2)t}\sqrt{n^{2}t^{2}+4(n-1)t}\ge \Big(2n^{2}t-\big(n^{2}+2(n-1)(k-6)\big)\Big)t
	\end{equation}
	Define 
	\begin{equation*}
	\begin{split}
		f_{n,k}(t)&:=n^4 t^3-n^2\big((k-1)n^2-2(k+2) n+2(k+2)\big) t^2\\
		&\quad -(n-1)\big(3 n-(k+2)\big)\big((k-1) n-(k+2)\big) t-(n-1)(n-2)^2,
	\end{split}
	\end{equation*}			
	and set
	\begin{equation*}
	t_1:=\frac{n^{2}+2(n-1)(k-6)}{2n^{2}}.
	\end{equation*}	
	Clearly, $t_1<\frac{1}{2}+\frac{k}{n}\le k-1$ as $k\ge 3$. 
	Note that 
	\begin{equation*}
	f'_{n,k}(t)=3n^{4}t^{2}-2n^{2}((k-1)n^{2}-2(k+2)n+2(k+2))t-(n-1)(3n-(k+2))((k-1)n-(k+2)),
	\end{equation*}
	then we obtain
	\begin{equation*}
	f_{n,k}(t_1)=-\frac{(n-2)^{2}}{8 n^{2}}\left((n-2)^{2}+2(n-1) k\right)\left(4(n-1)(k-6)(k-2)+n^{2}(2 k-3)\right)<0,
	\end{equation*}	
	\begin{equation*}
	f'_{n,k}(t_1)=-\frac{1}{4}(n-2)^{2}\left(8(n-1)(k-4)(k-2)+n^{2}(4 k-7)\right)<0,
	\end{equation*}	
	\begin{align*}
	f_{n,k}(k-1)&=k^2(n-1)((n-1)(k+3)+n^{2}(2 k-3))>0,\\
	f'_{n,k}(k-1)&=(n-2)^{2}(n^{2}-n+1)+n^{4}k(k-2)+3n^{2}(n-1)k^{2}\\
	&\quad +n(n^{2}-2)k^{2}+nk(n^{2}+3n-8)+k(k+4)>0,\\
	f_{n,k}(0)&=-(n-1)(n-2)^2<0.
	\end{align*}	

	Since $f_{n,k}(t)$ is a cubic function, there is a unique real root $t_0\in (t_1,k-1)$, and $t_0$ is the largest one of all possible real roots of $f_{n,k}(t)$. Moreover, since $f_{n,k}(0)<0$,  we always have  $t_0\in(0,k-1)$ whenever $t_1<0$ or $t_1\ge 0$.
	
	If $0< t< t_1$, then the inequality \eqref{eq4-5} strictly holds since the left side of \eqref{eq4-5} is positive while the right side is negative. 
		
	If $t\ge \max\{0,t_1\}$,  
	by squaring both sides of \eqref{eq4-5} and simplifying it, then we obtain
	\begin{equation*}
		(n-2)^{2}(1+4(k-2)t)\big(n^{2}t^{2}+4(n-1)t\big)\ge \Big(2n^{2}t-\big(n^{2}+2(n-1)(k-6)\big)\Big)^{2}t^{2},
	\end{equation*}
	which is equivalent to 	$f_{n,k}(t)\le 0.$	Hence, \eqref{eq4-5} holds in this case if and only if $\max\{0,t_1\}\le t\le t_0$, and ``$=$'' holds in \eqref{eq4-5} if and only if $t=t_0$.
	
	In summary, we obtain \eqref{cond-2'} holds if and only if $H^{2}\in (0,t_0]$, and ``$=$'' holds in \eqref{cond-2'} if and only if $H^{2}=t_0$. Combining with Lemma \ref{lem-rigidity}, we complete the proof.
\end{proof}

Corollary \ref{cor1.3} follows directly from Theorem \ref{thm1.1}, Theorem \ref{thm-MOR} and Theorem \ref{thm-3}, and we omit its proof. 

\begin{proof}[Proof of Corollary \ref{cor1.4}]
	We denote the two distinct principal curvatures, then we can assume $\lambda_1=\cdots=\lambda_{m}=\lambda, \lambda_{m+1}=\cdots=\lambda_{n}=\mu$. By Theorem \ref{thm1.1}, we derive that $S=m\lambda^{2}+(n-m)\mu^{2}$ is constant since $nH=m\lambda+(n-m)\mu$ is constant. Now $\lambda$ satisfies the quadratic equation of constant coefficients $mn\lambda^{2}-2mnH\lambda+n^{2}H^{2}-(n-m)S=0$, which implies that both $\lambda$ and $\mu$ are constant. So locally $M=\mathbb{S}^{m}(a)\times\mathbb{S}^{n-m}(\sqrt{1-a^{2}})$ by the classification theorem of isoparametric hypersurfaces due to E. Cartan \cite{Cartan38}. From \eqref{eq-prin-cur-1}, we have 
	\begin{equation*}
		nH=\frac{m-na^{2}}{a\sqrt{1-a^{2}}},\quad S=\frac{na^{4}-2ma^{2}+m}{a^{2}(1-a^{2})}.
	\end{equation*}
	Hence, \eqref{tri-1'} becomes
	\begin{equation*}
	\frac{(na^{2}-m)\big(3na^{6}-(2n+5m)a^{4}+5ma^{2}-m\big)}{\big(a^{2}(1-a^{2})\big)^{2}}=0.
	\end{equation*}
	We get rid of $a^{2}=m/n$ because it corresponds with the minimal case $H=0$. Hence, $a^{2}$ is the root of $P_{n,m}(x)$ defined by \eqref{eq-P}.
	
	At last, we point out $P_{n,m}(x)$ has only one real root since its discriminant is $-m(n-m)(32n^{2}-125mn+125m^{2})<0$, and this unique real root lies in $(0,1)$ since  $P_{n,m}(0)=-m<0,P_{n,m}(1)=n-m>0$. Hence, we complete the proof.
\end{proof}
\begin{rem}
	 The sufficient and necessary condition for a generalized Clifford torus being  $k$-harmonic was obtained by Maeta \cite{Maeta2015a} for $k=3$ and by Montaldo-Ratto \cite{MR18} for $k\ge 4$.
\end{rem}

\begin{proof}[Proof of Corollary \ref{cor1.5}]
	If $H\neq 0$, then from Theorem \ref{thm1.1} we conclude $M$ has constant scalar curvature, which follows that  $M$ must be isoparametric according to a result of S. Chang \cite{Chang94}. But there is no any proper triharmonic isoparametric hypersurface in $\mathbb{S}^{n+1}$ (cf. \cite[Theorem 1.12]{MOR19} or the following proof of Theorem \ref{thm1.7} for details), which is a contradiction. Hence, $M$ must be minimal.
\end{proof}

\begin{rem}
	From Corollary \ref{cor1.3} we can only conclude that $|H|^2 < t_0$ in hypotheses of Corollary \ref{cor1.5}, so we cannot get the contradiction in this way.
\end{rem}

\begin{proof}[Proof of Theorem \ref{thm1.7}]
	We recall a classification theorem due to Cheng-Wan \cite{CW93}, which states that, any complete CMC hypersurface with constant scalar curvature in $\mathbb{S}^{4}$ is isoparametric. It is well known that the classification of isoparametric hypersurfaces with at most 3 distinct principal curvatures in a sphere were obtained by E. Cartan \cite{Cartan38}, so we can verify if each case satisfies \eqref{tri-1'}.
	
	If all the principal curvatures of $M$ are the same, then $M$ is a hypersphere $\mathbb{S}^{3}(a)$, which is totally geodesic. A simple calculation from \eqref{tri-1'} shows that $a$ must be $1/\sqrt{3}$. This is Item (1).
	
	If $M$ has two distinct principal curvatures, then we have Item (2) by taking $n=3, m=2$ in  Corollary \ref{cor1.4}.
	
	If $M$ has three distinct principal curvatures, then the principal curvatures are given by (cf. \cite{Cartan38}):
	\begin{equation*}
	\lambda_{1}=\cot s,\lambda_{2}=\cot\Big(s-\frac{\pi}{3}\Big),\lambda_{3}=\cot\Big(s+\frac{\pi}{3}\Big), \quad s\in \Big(0,\frac{\pi}{3}\Big).
	\end{equation*}
	Hence, after a long but direct calculation,  \eqref{tri-1'} becomes
	\begin{equation*}
	\frac{9\csc^{4}(s)(3\cos(12s)+28\cos(6s)+41\big)}{8\big(1+2\cos(2s)\big)^{2}}=0.
	\end{equation*}
	It follows that $\cos(6s)$ should satisfies $3x^{2}+14x+19=0$, but this quadratic equation has no real roots. So we obtain that $M$ is not triharmonic in this case.
\end{proof}
\begin{rem}
	The exclusion of the last case in the above proof can be directly obtained from \cite[Theorem 1.12]{MOR19}. Actually,  
	Montaldo-Oniciuc-Ratto \cite{MOR19} discussed  all of the possibilities that an isoparametric hypersurface in $\mathbb{S}^{n+1}$ becomes  proper $k$-harmonic for suitable values of $k$. 
\end{rem}

\end{document}